\theoremstyle{plain}
\newtheorem{theorem}{Theorem}
\newtheorem{rem}{Remark}[theorem]
\newtheorem{claim}{Claim}[theorem]
\newtheorem{lemma}{Lemma}[theorem]
\newtheorem{conj}{Conjecture}[theorem]
\newtheorem{corollary}{Corollary}[theorem]
\DeclareMathOperator{\So}{SO}
\DeclareMathOperator{\id}{id}
\DeclareMathOperator{\trace}{trace}
\DeclareMathOperator{\isom}{Isom}
\DeclareMathOperator{\dist}{dist}
\DeclareMathOperator{\spec}{spec}
\DeclareMathOperator{\Sim}{Sim}
\title{%
  {Convergence properties of a geometric mesh smoothing algorithm
}%
}
\author{%
  Dimitris Vartziotis}%
\address{TWT GmbH Science \& Innovation, Department for Mathematical Research \&
Services, Ernsthaldenstr. 17, 70565 Stuttgart, Germany;\newline
NIKI Ltd. Digital Engineering, Research Center, 205 Ethnikis Antistasis Street,
45500 Katsika, Ioannina, Greece
         }
\email{dimitris.vartziotis@nikitec.gr}
\author{Doris Bohnet}%
  \address{TWT GmbH Science \& Innovation, Department for Mathematical Research \&
Services, Ernsthaldenstr. 17, 70565 Stuttgart, Germany}
\email{doris.bohnet@twt-gmbh.de}
\begin{document}
\maketitle

\begin{abstract}
We describe a simple geometric transformation of triangles which leads to an efficient and effective algorithm to smooth triangle and tetrahedral meshes. Our focus lies on the convergence properties of this algorithm: we prove the effectivity for some planar triangle meshes and further introduce dynamical methods to study the dynamics of the algorithm which may be used for any kind of algorithm based on a geometric transformation. 
\end{abstract}
\maketitle

\section{Introduction}\label{s.first}
\subsection{Preliminary remarks}
The finite element method is the standard instrument to simulate the behavior of solid bodies or fluids in engineering and physics. The first preparatory step of this method is the discretization of the underlying domain into finitely many elements which could be easily described by parameters, i.e. surfaces are mostly approximated by triangles, quatrilaterals or parallelograms. Because of the design process in modern engineering, an initial mesh for the domain is often given, and the next important step is the preprocessing of this mesh to obtain a good base for the application of the finite element method. As the requirements for simulation results are more and more strict and real time simulations and simulations on evolving objects present new challenges, a fast, reliable and preferably automatic mesh preprocessing is an important link in the simulation process.  \\
Not surprisingly, there is a wide variety of methods at hand to improve the mesh quality of a given mesh, see e.g. the surveys \cite{FreyGeorge2000} and \cite{Carey1998}. One can identify two main approaches: 

\begin{description}
\item[Geometry-based] A geometric smoothing method changes directly the geometry of the mesh, that is, it relocates the nodes. A popular example is the Laplacian smoothing which maps every node to the arithmetic mean of its neighboring nodes (see e.g. \cite{H76} or \cite{Field1988}). There are also methods that change the topology of the mesh by deleting small elements or subdividing large ones. All these methods have in common that they are usually quickly implemented and very fast; additionally, they can be often straightforwardly combined with techniques of parallel computing. The main disadvantage is their heuristic character so that the convergence of an algorithm is mostly only empirically, but not theoretically assured. Consequently, they are sometimes combined with optimizational approaches as in the early work \cite{Freitag1997}.    
\item[Optimization-based] The principal idea behind any optimizational approach is to define a function on the set of meshes which represents the quality of a mesh, and to find the maximum of this function by usual numerical optimization, e.g. gradient methods (see \cite{FreitagKnupp1999} and following articles by these authors). The main advantage of these methods is clearly that they lead to a mesh of higher quality, but in the case of a non-convex quality function, it can usually not be assured that the transformed mesh corresponds to the global and not only local optimum. Also, the computation is usually costly with regard to runtime and storage. On the other hand, new advances for fast and robust solutions of optimization problems could be used as e.g. evolutionary optimization algorithms (see e.g. \cite{YK09}, \cite{HK03}). 
\end{description}
In this article we present a geometric approach for mesh smoothing which consists of a simple geometric transformation of every element of a mesh which do not affect the topology. Here we only consider triangle and tetrahedral meshes. This ansatz is similar to the GETMe algorithm introduced for triangle and tetrahedral meshes in \cite{VAGW08,VWS09} and proved to be element-wise effective in \cite{VH14}. But while the serial of these articles mainly focus on the numerical results and the improvement of runtime and performance by adjusting the algorithm, we study the mathematics underlying our presented geometric method and prove that any not too distorted planar mesh of triangles converges to the best possible mesh for the given mesh topology, that is, the difference between the normalized distances from the vertices to the centroid for all triangles is the smallest possible. 
This point - the mathematical discussion of convergence properties and the application of dynamical methods - is surely the main achievement of the present article. For completeness and as motivation for a future application, we shortly discuss the performance of our method as a smoothing algorithm, but we do not explore the practical aspects of our algorithm in detail.
\subsubsection{Organization of the article} In Section~\ref{s.trans} we describe the discrete geometric transformation of a triangle element on which the smoothing algorithm is based and discuss its mathematical properties. In the forthcoming Section~\ref{s.convergence} we derive the smoothing algorithm from this transformation for a triangle mesh and prove its convergence for some particular triangle meshes, i.e. if the transformation is iteratively applied, the mesh converges to the best possible mesh for a given mesh topology. At the end -- in Section~\ref{s.numerical} -- we briefly discuss the numerical results.

\section{The geometric triangle transformation}\label{s.trans}
Before we start with the rigorous mathematical description we motivate the geometric transformation, the subject of this article, and its regularizing mechanism by the following observations which have their offspring in \cite{V13}:
\subsection{Introductory observations}  
\subsubsection{Imitating the rotational symmetry group action of the triangle}
The symmetry group of a regular triangle $\Delta=(z_0,z_1,z_2)$, $z_i \in \mathbb{C}$, is the dihedral group $D_{3}$ which is generated by a reflection and a rotation by $\frac{2\pi}{3}$ around the circumcenter $c$ of the triangle. Consider the rotation: if the circumcenter lies in the origin, the rotational element then acts on the triangle by mapping the vector $z_{i-1}$ onto the vector $z_i$ for $i\in\mathbb{Z}_3$. \\
If the triangle is not equilateral, we can take the centroid, that is, the arithmetic mean of the three nodes, instead of the circumcenter and imitate the rotation by still mapping the vector $z_{i-1}$ onto $\frac{|z_{i-1}|}{|z_i|}z_{i}$ such that the resulting vector has length equal to $z_{i-1}$ but points into the direction of $z_i$, i.e. we \emph{rotate} the vector $z_{i-1}$ around $c$.\\

Sure, this rotation around the centroid is neither $3$-periodic nor isometric, but this action, if iterated, converges to the classical rotation by $\frac{2\pi}{3}$ because the centroid converges to the circumcenter and the distances from the centroid to the vertices become equal. This will be shown in Subsection~\ref{s.proof} below.
\begin{figure}[htbp]

\includegraphics[width=\textwidth]{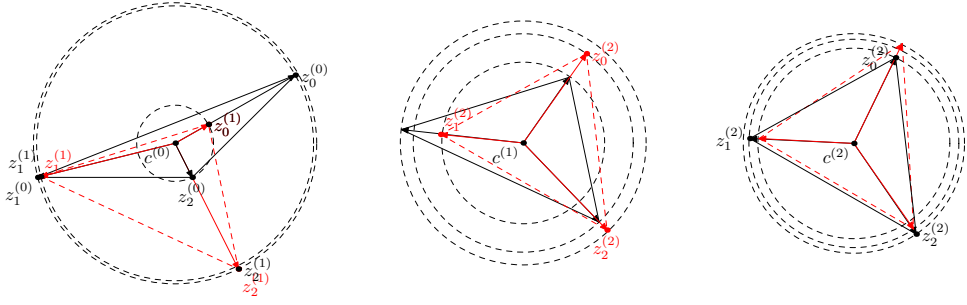}

\caption{The first three iterations of the geometric element transformation represented as rotations: observe how the circle radii approach and the centroid moves to the circumcenter.}
\label{fig:triangle_rotated}
\end{figure}

\subsubsection{First intuitive explanation of the mechanism}
Consider the centroid $c$. We start at the origin $c^{(0)}=0$. After the first iteration we get
$c^{(1)}=\frac{1}{3}\left(\frac{\left|z_2\right|}{\left|z_0\right|}z_0 + \frac{\left|z_0\right|}{\left|z_1\right|}z_1 + \frac{\left|z_1\right|}{\left|z_2\right|}z_2\right).$
Just change it a bit setting $r_i:=\frac{\left|z_{i-1}\right|}{\left|z_i\right|}$ to obtain
$\tilde{c}^{(1)}=\frac{1}{r_0+r_1+r_2}\left(r_0z_0 + r_1z_1 + r_2z_2\right)$.
Observe that by the inequality of the geometric and arithmetic mean we always have  $3=3(\sqrt[3]{r_0r_1r_2})\leq r_0+r_1+r_2$.
The new $\tilde{c}$ is a \emph{weighted arithmetic mean} of the vertices where the weight $r_i:=\frac{\left|z_{i-1}\right|}{\left|z_i\right|}$ is greater the smaller the distance $\left|z_i\right|$ compared to $\left|z_{i-1}\right|$. Accordingly, $c$ is moved more than the average into the direction of $z_i$ if $z_i$ was much closer to $c$ then $z_{i-1}$. The new distance from $c^{(1)}$ to $\frac{\left|z_{i-1}\right|}{\left|z_i\right|}z_i$ is then less than $\left|z_{i-1}\right|$. On the other hand, if $z_i$ was far from $c$ compared to $z_{i-1}$, $c$ is moved less than the average into the direction of $z_i$, and the new distance from $c^{(1)}$ to $\frac{\left|z_{i-1}\right|}{\left|z_i\right|}z_i$ is greater than $\left|z_{i-1}\right|$. Consequently, due to the controlling weights the maximum distance from a vertex to the centroid lessens while the minimal distance augments such that the distances become equal in the long run. In other words, the centroid moves to the circumcenter of the triangle. It is obvious that $c$ stops moving if and only if the distance to each vertex is equal or equivalently, if and only if $c$ is the circumcenter.\\
In Section~\ref{s.proof} we give a rigorous proof that this transformation iteratively applied to any non-degenerate triangle makes it equilateral.

\subsection{The geometric element transformation}
\subsubsection{Description of the geometric element transformation}

Let $\Delta=(x_0,x_1,x_2)$ with $x_i \in \mathbb{R}^2$ or $x_i \in \mathbb{R}^3$ for $i=0,1,2$ be a triangle in the euclidean space where the vertices are denoted counter clockwisely. Denote by $c=\frac{1}{3}(x_0 + x_1 + x_2)$ the centroid of the triangle. The transformation works then as following: 
\begin{align*}\Delta_{new}&=(x_{0,new},x_{1,new},x_{2,new}),\\  
x_{i,new}&=\frac{\left\|x_{i-1} - c\right\|_2}{\left\|x_i - c\right\|_2}(x_i - c) + c \quad \mbox{for}\; i\in\mathbb{Z}_3.
\end{align*}  
To keep the centroid fixed we move the centroid $c_{new}$ of the transformed triangle $\Delta_{new}$ back into the old centroid $c$: 
$x_{i,new}=x_{i,new}-c_{new} + c, \; i=0,1,2.$
Combining these two steps into one we get for $i\in\mathbb{Z}_3$ with $r_i:=\left\|x_{i-1}-c\right\|\left\|x_i-c\right\|^{-1}$:
\begin{framed}
\begin{equation}\label{t.getme}x_{i,new}=\frac{2}{3}r_i(x_i - c) - \frac{1}{3}r_{i+1}(x_{i+1} - c)- \frac{1}{3}r_{i-1}(x_{i-1} - c) + c. \end{equation}
\end{framed}
This is the whole, very simple geometric transformation which can be directly implemented into any mathematical software, e.g. Matlab. 
\subsubsection{Formal proof of element-wise convergence}\label{s.proof}
For the proof that any non-degenerate triangle converges under Transformation~(\ref{t.getme}) to a equilateral triangle we can supppose that the triangle lies in the euclidean plane $\mathbb{R}^2$ which we identify -- for simplifying notations -- with $\mathbb{C}$: 
let $\Delta=(z_0^{(0)},z_1^{(0)},z_2^{(0)})$ with $z_i^{(0)} \in \mathbb{C}$ be an arbitrary triangle with $z_i^{(0)} \neq z_j^{(0)}$ for $i \neq j$. Denote by $c=\frac{1}{3}(z_0 + z_1 + z_2)$ the centroid of the triangle. Without loss of generality, we assume that the centroid $c$ lies in the origin, that is, $c=0$. For $n \in \mathbb{N}$ denote by $r_i^{(n)}$ the ratio $\frac{|z_{i-1}^{(n)}|}{|z^{(n)}_{i}|}$ with $i \in \mathbb{Z}_3$. Then Transformation~(\ref{t.getme}) becomes the following transformation, recursively defined for $n \geq 1$ and $i \in \mathbb{Z}_3$: 
\begin{equation}\label{e.transformation}
z_i^{(n)}=\frac{2}{3}r_{i}^{(n-1)}z_i^{(n-1)} -\frac{1}{3}r_{i+1}^{(n-1)}z_{i+1}^{(n-1)}-\frac{1}{3}r_{i+2}^{(n-1)}z_{i+2}^{(n-1)} 
\end{equation}
We prove that the ratio $r_i^{(n)}$ for $i=0,1,2$ converges to $1$. This implies that the centroid converges to the circumcenter and the distance of the vertices to the centroid gets constant. 
\begin{theorem}\label{t.convergence}
With the notations above, we have 
$\lim_{n \rightarrow \infty} r_i^{(n)}= 1$
for $i=0,1,2$, i.e. the ratio of the distances from the vertices to the centroid converges to $1$.  
\end{theorem}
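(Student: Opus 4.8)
The plan is to track the three distances $d_i^{(n)} := |z_i^{(n)}|$ and show that their spread $\max_i d_i^{(n)} - \min_i d_i^{(n)}$ tends to zero; equivalently, after normalizing, that all three ratios $r_i^{(n)}$ tend to $1$. The heuristic in the preceding subsection already indicates the mechanism: the new centroid is (up to the centering correction) a weighted mean of the vertices with weights $r_i$ that are large precisely where $d_i$ is small relative to $d_{i-1}$, so the largest distance should shrink and the smallest should grow. I would first make this precise by deriving, from the transformation~\eqref{e.transformation}, an explicit formula for $d_i^{(n)}$ — or better, for $d_i^{(n)}/d_{i-1}^{(n)} = 1/r_{i-1}^{(n)}$... wait, $r_i = d_{i-1}/d_i$, so I would aim for a recursion expressing $r_i^{(n)}$ in terms of the $r_j^{(n-1)}$. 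Note $r_0 r_1 r_2 = 1$ identically, so the state space is effectively two-dimensional (say $(r_0, r_1)$), with the equilateral configuration corresponding to $r_0 = r_1 = r_2 = 1$ as the unique fixed point.

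Next I would set up a Lyapunov/monotonicity argument. A natural candidate functional is $V^{(n)} := r_0^{(n)} + r_1^{(n)} + r_2^{(n)} - 3 \ge 0$, which vanishes exactly at the equilateral point by the AM–GM remark $r_0 + r_1 + r_2 \ge 3\sqrt[3]{r_0 r_1 r_2} = 3$. The key step is to show $V^{(n)}$ is strictly decreasing unless we are already at the fixed point, and that it decreases by a definite amount — e.g. $V^{(n+1)} \le \lambda V^{(n)}$ for some $\lambda < 1$ on the relevant region, or at least $V^{(n+1)} \le V^{(n)} - \phi(V^{(n)})$ for a positive function $\phi$. Alternatively, one could work with $\log d_i$ and the quantity $\sum_i (\log d_i - \overline{\log d})^2$, the variance of the log-distances, which interacts cleanly with the multiplicative structure of the $r_i$. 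Either way, one must bound the new distances $d_i^{(n)}$ from~\eqref{e.transformation}: $|z_i^{(n)}|$ is the modulus of $\tfrac23 r_i z_i - \tfrac13 r_{i+1} z_{i+1} - \tfrac13 r_{i+2} z_{i+2}$, and the cross terms involve the angles between the vertices, which are not directly controlled by the $r_i$ alone.

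That last point is the main obstacle: the triangle's shape is not determined by the three ratios $r_i$ — one also needs the angular positions of the $z_i$, and the transformation does move them. So the dynamics genuinely live on a larger space (essentially the shape space of triangles, three real dimensions modulo similarity, or two if we also quotient appropriately), and the functional $V$ in terms of the $r_i$ alone may fail to be monotone once the angle degrees of freedom are taken into account. I would handle this by either (i) proving that the angles are slaved — showing the vertices stay "spread out" (angular gaps bounded away from $0$ and $2\pi$), using the centering step and the fact that the $r_i$ are bounded as long as the triangle is non-degenerate, and then showing the worst case for the distance estimate occurs at a controllable configuration; or (ii) choosing coordinates adapted to the full shape space and exhibiting the equilateral point as a hyperbolic attracting fixed point via a linearization (computing the derivative of the iteration map there and checking its spectral radius is $< 1$), then combining local attraction with a global "no escape to degeneracy" argument — i.e. showing the non-degeneracy region is forward-invariant and every orbit enters the basin of attraction. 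Approach (ii) is cleaner and is presumably what the authors intend, given the paper's emphasis on dynamical methods; the technical heart is the eigenvalue computation at the fixed point together with ruling out orbits that degenerate (some $z_i^{(n)} \to z_j^{(n)}$) or run off to the boundary of shape space.
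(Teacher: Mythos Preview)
Your proposal sketches two strategies but carries neither to a proof, and it overlooks the much more elementary route the paper actually takes. The paper does not work in the two-dimensional $(r_0,r_1)$ state space you set up, nor does it linearize; instead it proves directly, via two short lemmas, that $\max_i |z_i^{(n)}|$ is strictly decreasing and $\min_i |z_i^{(n)}|$ is strictly increasing in $n$. The squeeze $\min/\max \le r_i^{(n)} \le \max/\min$ then forces $r_i^{(n)}\to 1$. The device that dissolves your ``main obstacle'' (the angular degrees of freedom) is simply the centroid constraint $z_0^{(n)}+z_1^{(n)}+z_2^{(n)}=0$: in estimating $|z_i^{(n+1)}|$ from \eqref{e.transformation} one substitutes one vertex in terms of the other two, and the resulting expressions can be bounded using only the $r_j^{(n)}$ and the identity $r_0 r_1 r_2=1$ (plus AM--GM), with no angle ever appearing explicitly.

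Your approach~(ii) --- linearization at the equilateral point --- would establish only \emph{local} attraction. The paper in fact performs exactly that eigenvalue computation later (Section~\ref{s.convergence}, Case~1) as a complement to, not a substitute for, the global result; the ``every orbit eventually enters the basin'' step you leave open is precisely what Theorem~\ref{t.convergence} supplies. Your approach~(i) and the Lyapunov candidate $V=r_0+r_1+r_2-3$ run into the angular coupling you yourself diagnose, and you offer no mechanism to control it. The missing idea is that one should track $\max_i|z_i|$ and $\min_i|z_i|$ separately and exploit $\sum z_i=0$ to get monotonicity of each, rather than trying to close a recursion on the $r_i$ alone.
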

Before we start with the proof of Theorem~\ref{t.convergence} we show in the next two preliminary lemmata that the maximal distance $\max_i|z_i^{(n)}|$ from a vertex $z_i^{(n)}$ to the centroid $c$ is a strictly decreasing sequence, and symmetrically, that the minimal distance from a vertex to the centroid is a strictly increasing sequence.  
\begin{lemma}\label{l.max}
For $n \geq 0$ we have $\max_{i=0}^2 |z_i^{(n+1)}| < \max_{i=0}^{2}|z_i^{(n)}|$. 
\end{lemma}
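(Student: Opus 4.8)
The plan is to place the centroid at the origin, so that $z_0^{(n)}+z_1^{(n)}+z_2^{(n)}=0$ for every $n$, to write $\rho_i:=|z_i^{(n)}|$, $u_i:=z_i^{(n)}/\rho_i$, $r_i:=r_i^{(n)}=\rho_{i-1}/\rho_i$ and $M:=\max_i\rho_i$, and to prove the equivalent statement that $|z_i^{(n+1)}|<M$ for each $i\in\mathbb{Z}_3$. (If $z^{(n)}$ is equilateral then Transformation~(\ref{e.transformation}) fixes it and nothing can decrease, so the only interesting case — the implicit hypothesis — is that the $\rho_i$ are not all equal, i.e. $\mu:=\min_i\rho_i<M$.) Introducing the ``rotated'' vertex $w_i:=r_iz_i^{(n)}=\rho_{i-1}u_i$, one has $|w_i|=\rho_{i-1}$, hence $\{|w_0|,|w_1|,|w_2|\}=\{\rho_0,\rho_1,\rho_2\}$ and in particular $\max_i|w_i|=M$; and Transformation~(\ref{e.transformation}) is precisely $z_i^{(n+1)}=w_i-\bar w$ with $\bar w:=\tfrac13(w_0+w_1+w_2)$. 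So the ``rotation'' step does not change $M$ at all: the whole decrease must be produced by the recentering, and the task is to show $\max_i|w_i-\bar w|<M$.

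The first ingredient is the identity $\bar w=\tfrac13\sum_i(\rho_{i-1}-\rho_i)\,u_i$, obtained by subtracting the vanishing sum $\tfrac13\sum_i\rho_iu_i=0$ from $\bar w=\tfrac13\sum_i\rho_{i-1}u_i$. The three coefficients $\rho_{i-1}-\rho_i$ have absolute values $M-\mu$, $M-\nu$, $\nu-\mu$ in some order ($\mu\le\nu\le M$ the sorted distances), so they telescope to $|\bar w|\le\tfrac13\sum_i|\rho_{i-1}-\rho_i|=\tfrac23(M-\mu)$. This already disposes of one vertex: for the index $i$ with $\rho_{i-1}=\mu$ (the successor of the shortest vertex),
\[
|z_i^{(n+1)}|=|w_i-\bar w|\le\rho_{i-1}+|\bar w|\le\mu+\tfrac23(M-\mu)=\tfrac13\mu+\tfrac23M<M.
\]

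For the remaining vertices the triangle inequality is too lossy and one must compute. I would substitute $z_{i-1}^{(n)}=-z_i^{(n)}-z_{i+1}^{(n)}$ into Transformation~(\ref{e.transformation}) to get $3z_i^{(n+1)}=(2r_i+r_{i-1})z_i^{(n)}+(r_{i-1}-r_{i+1})z_{i+1}^{(n)}$, square, and eliminate the angle by the law of cosines for the closed triple $z_0^{(n)}+z_1^{(n)}+z_2^{(n)}=0$, namely $\langle z_i^{(n)},z_{i+1}^{(n)}\rangle=\tfrac12(\rho_{i-1}^2-\rho_i^2-\rho_{i+1}^2)$, together with $r_j=\rho_{j-1}/\rho_j$. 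Clearing denominators, $9\rho_{i-1}^2\rho_i\rho_{i+1}\,|z_i^{(n+1)}|^2$ becomes an explicit homogeneous polynomial $F$ of degree $6$ in $(a,b,d):=(\rho_{i-1},\rho_i,\rho_{i+1})$, and the claim reduces to the polynomial inequality
\[
F(a,b,d)<9\,a^2bd\,\max(a,b,d)^2
\]
for all $a,b,d>0$ satisfying the strict triangle inequalities (the exact condition for $z^{(n)}$ to be a non-degenerate triangle with its centroid inside), with strict inequality unless $a=b=d$. By homogeneity normalize $\max(a,b,d)=1$ and split into the cases $a=1$, $b=1$, $d=1$; in each case the difference of the two sides is a two-variable polynomial that is visibly nonnegative on the admissible region — for instance the symmetric sub-case $a=b=1$ collapses to $2d(1-d)^2(2+d)\ge0$. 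The residual strictness (when two of $a,b,d$ coincide, or a coefficient $2r_i+r_{i-1}$ or $r_{i-1}-r_{i+1}$ degenerates) is automatic, since then $z_i^{(n)}$ and $z_{i+1}^{(n)}$ are non-parallel, so the triangle inequality applied to $3z_i^{(n+1)}=(2r_i+r_{i-1})z_i^{(n)}+(r_{i-1}-r_{i+1})z_{i+1}^{(n)}$ is strict.

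The crux — and the only part that is not routine — is this last polynomial inequality over the triangle region. The triangle hypothesis genuinely cannot be dropped (the inequality already fails at, e.g., $(a,b,d)=(1,10,1)$), so the admissibility constraints must be fed into the estimate; the cleanest device is presumably the Ravi substitution $a=y+z$, $b=z+x$, $d=x+y$ with $x,y,z>0$, which turns the statement into a polynomial positivity question in $x,y,z$, after which the case bookkeeping and the search for a sum-of-squares certificate is the real labor. The companion estimate for the minimal distance is then proved by the symmetric computation with $M$ and $\max$ replaced by $\mu$ and $\min$.
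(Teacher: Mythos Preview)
Your route is genuinely different from the paper's and considerably heavier. The paper does not compute $|z_i^{(n+1)}|^2$ or invoke the law of cosines at all: it fixes the index realizing $\max_i|z_i^{(n+1)}|$ (say $i=0$) and then performs a case split on which \emph{ratio} $r_j=|z_{j-1}^{(n)}|/|z_j^{(n)}|$ is maximal, not on which $\rho_j$ is. In each case one substitutes $z_j^{(n)}=-(z_{j+1}^{(n)}+z_{j+2}^{(n)})$ for the appropriate $j$ into $z_0^{(n+1)}=\tfrac23 r_0 z_0^{(n)}-\tfrac13 r_1 z_1^{(n)}-\tfrac13 r_2 z_2^{(n)}$, arranges that the two surviving coefficients are dominated by $\max_j r_j$, and then the identity $r_j|z_j^{(n)}|=|z_{j-1}^{(n)}|$ converts the bound directly into a comparison with one of the $|z_k^{(n)}|$. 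No polynomial algebra, no Ravi substitution; the argument occupies a handful of lines.

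Your reduction is set up correctly --- the recentering description $z_i^{(n+1)}=w_i-\bar w$, the bound $|\bar w|\le\tfrac23(M-\mu)$, and the elimination of the angle via $\langle z_i^{(n)},z_{i+1}^{(n)}\rangle=\tfrac12(\rho_{i-1}^2-\rho_i^2-\rho_{i+1}^2)$ are all valid --- but you stop at the reduction. The inequality $F(a,b,d)<9a^2bd\max(a,b,d)^2$ on the triangle region is asserted, not proved: ``the search for a sum-of-squares certificate is the real labor'' is a description of what remains to be done, not an argument. To complete the proof along your lines you would have to actually carry out the Ravi substitution and exhibit a positivity certificate in each of the three branches of the $\max$, which is substantial bookkeeping for something the paper dispatches with elementary estimates. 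As written, the proposal has a genuine gap precisely at the step you yourself flag as the crux.
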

\begin{proof}[Proof of Lemma~\ref{l.max}]
We prove this Lemma by a simple estimation. Without loss of generality we assume that $\max_{i=0}^2|z_i^{(n+1)}| = |z_0^{(n+1)}|$. We have
\begin{align*}
|z_0^{(n+1)}| &= |\frac{2}{3} r_0^{(n)} z_0^{(n)} - \frac{1}{3}r_1^{(n)}z_1^{(n)}- \frac{1}{3}r_2^{(n)}z_2^{(n)}|\; \mbox{utilizing}\; z_0^{(n)} + z_1^{(n)}+z_2^{(n)}=0\\
&=|-\frac{2}{3}r_0^{(n)}(z_1^{(n)} + z_2^{(n)})- \frac{1}{3}(r_1^{(n)}z_1^{(n)} + r_2^{(n)}z_2^{(n)})|\\
&< \max_{i}r_i^{(n)} |z_0^{(n)}|
\end{align*}
If $r_0^{(n)}=\max_i r_i^{(n)}$ the proof is easily finished by 
$$|z_0^{(n+1)}| < r_0|z_0^{(n)}|=|z_2^{(n)}| \leq \max_i |z_i^{(n)}|.$$
Otherwise, assume that $r_1^{(n)}$  is maximal (the case that $r_2^{(n)}$ is maximal works analogously). Then we substitute in the equation above $z_1^{(n)}=-z_0^{(n)}-z_2^{(n)}$ and we get: 
\begin{align*}
|z_0^{(n+1)}| &= |(\frac{2}{3}r_0^{(n)} + \frac{1}{3}r_1^{(n)})z_0^{(n)}+ (\frac{1}{3}r_1^{(n)}-\frac{1}{3}r_2^{(n)}) z_2^{(n)}|\; r_1^{(n)}=\max r_i^{(n)}.\\
& < r_1^{(n)}|z_1^{(n)}|= |z_0^{(n)}|\;\leq \; \max |z_i^{(n)}|.
\end{align*}
\end{proof}
Now we prove in an analogous way that the sequence of minima is monotonically increasing:
\begin{lemma}\label{l.min}
For $n \geq 0$ we have $\min_{i=0}^2 |z_i^{(n)}| < \min_{i=0}^2 |z_i^{(n+1)}|$. 
\end{lemma}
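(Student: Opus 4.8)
The plan is to mirror the proof of Lemma~\ref{l.max} almost verbatim, exchanging the roles of maximum and minimum and reversing each inequality. So suppose without loss of generality that $\min_{i=0}^2|z_i^{(n+1)}| = |z_0^{(n+1)}|$, and I will show $|z_0^{(n+1)}| > \min_{i=0}^2|z_i^{(n)}|$. Using the centroid condition $z_0^{(n)}+z_1^{(n)}+z_2^{(n)}=0$ to eliminate one vertex from the transformation formula~(\ref{e.transformation}), I want a lower bound of the shape $|z_0^{(n+1)}| > r_j^{(n)}|z_j^{(n)}|$ where $j$ is chosen so that $r_j^{(n)}$ is the \emph{minimal} ratio; then $r_j^{(n)}|z_j^{(n)}| = |z_{j-1}^{(n)}| \geq \min_i|z_i^{(n)}|$ finishes it. The case split will be on which of $r_0^{(n)}, r_1^{(n)}, r_2^{(n)}$ is smallest, exactly paralleling the three-way split in Lemma~\ref{l.max}.

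Concretely: if $r_0^{(n)} = \min_i r_i^{(n)}$, then substituting $z_1^{(n)}+z_2^{(n)} = -z_0^{(n)}$ into $z_0^{(n+1)} = \frac{2}{3}r_0^{(n)}z_0^{(n)} - \frac{1}{3}r_1^{(n)}z_1^{(n)} - \frac{1}{3}r_2^{(n)}z_2^{(n)}$ rewrites it as $z_0^{(n+1)} = -\frac{2}{3}r_0^{(n)}(z_1^{(n)}+z_2^{(n)}) - \frac13(r_1^{(n)}z_1^{(n)}+r_2^{(n)}z_2^{(n)})$; one then regroups and uses that both $r_1^{(n)}$ and $r_2^{(n)}$ exceed $r_0^{(n)}$ to bound the modulus from below by $r_0^{(n)}|z_0^{(n)}| = |z_2^{(n)}| \geq \min_i|z_i^{(n)}|$. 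If instead $r_1^{(n)} = \min_i r_i^{(n)}$ (and symmetrically for $r_2^{(n)}$), substitute $z_1^{(n)} = -z_0^{(n)}-z_2^{(n)}$ to get $z_0^{(n+1)} = (\frac23 r_0^{(n)} + \frac13 r_1^{(n)})z_0^{(n)} + \frac13(r_1^{(n)} - r_2^{(n)})z_2^{(n)}$ and bound below by $r_1^{(n)}|z_1^{(n)}| = |z_0^{(n)}| \geq \min_i|z_i^{(n)}|$, using $r_1^{(n)} \leq r_2^{(n)}$ and $r_1^{(n)} \leq r_0^{(n)}$.

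The subtlety — and the one place the argument is not purely formal dualization — is that the reverse triangle inequality does not drop out as cleanly as the forward one. In Lemma~\ref{l.max} one uses $|a+b| \leq |a|+|b|$ freely; here I need lower bounds of the form $|\alpha z_0^{(n)} + \beta z_2^{(n)}| \geq$ something, which requires controlling possible cancellation between the two terms. The key observation that saves this is that the coefficients are \emph{real and of the same sign} in the decisive grouping (for instance $\frac23 r_0^{(n)} + \frac13 r_1^{(n)} > 0$ and, when $r_1^{(n)}$ is minimal, $\frac13 r_1^{(n)} - \frac13 r_2^{(n)} \leq 0$), so one is really estimating $|\alpha z_0^{(n)} - |\beta| z_2^{(n)}|$ with $\alpha, |\beta| \geq 0$; combined with $|z_0^{(n)}|, |z_2^{(n)}| \leq \max_i|z_i^{(n)}|$ from Lemma~\ref{l.max} applied at step $n$, and the fact that no two vertices coincide so $|z_0^{(n)}|+|z_2^{(n)}| > |z_0^{(n)}+z_2^{(n)}| = |z_1^{(n)}|$, one extracts the strict lower bound $r_j^{(n)}|z_j^{(n)}|$. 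I expect this cancellation bookkeeping to be the main obstacle; everything else is a routine transcription of the previous lemma's three-case estimate with inequalities reversed.
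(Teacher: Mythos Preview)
Your proposal has a genuine gap at precisely the point you flag as ``the main obstacle,'' and the tools you list for closing it are not the right ones. Observing that the coefficients in $(\tfrac{2}{3}r_0+\tfrac{1}{3}r_1)z_0 + \tfrac{1}{3}(r_1-r_2)z_2$ are real with known signs does \emph{not} produce a useful lower bound on the modulus: for complex $z_0,z_2$ the reverse triangle inequality only gives $\bigl|\alpha|z_0|-\beta|z_2|\bigr|$, and a quick check (e.g.\ $r_0=r_2=2$, $r_1=\tfrac14$) shows this can fall well below the target $|z_0|$. Neither the bounds $|z_i|\le\max_j|z_j|$ nor the strict triangle inequality $|z_0|+|z_2|>|z_1|$ rescue this; they point the wrong way for a lower estimate. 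So your Cases~2 and~3 are not actually established.

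The paper's proof is \emph{not} a formal dualization of Lemma~\ref{l.max}: it brings in two ingredients you never mention, the product identity $r_0^{(n)}r_1^{(n)}r_2^{(n)}=1$ and the AM--GM inequality. After extracting a lower bound of the shape $|z_0^{(n+1)}|>\bigl(\tfrac{2}{3}r_0+\tfrac{1}{3}\min(r_1,r_2)\bigr)|z_0|$, the paper does a three-way case split on whether $r_0$ is the smallest, the largest, or the middle ratio (not on which $r_i$ is globally minimal, as you propose). In the latter two cases the coefficient is pushed above~$1$, respectively above $(r_1/r_2)^{1/3}\ge 1$, precisely via AM--GM and the product relation. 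This asymmetry between the max and min lemmas is real: the min direction genuinely needs the multiplicative constraint on the $r_i$, and your purely additive bookkeeping cannot substitute for it.
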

\begin{proof}[Proof of Lemma~\ref{l.min}]
Assume without loss of generality that $|z_0^{(n+1)}|=\min |z_i^{(n+1)}|$. We have the following estimate: 
\begin{align}\label{e.estimate1}
|z_0^{(n+1)}| &= |\frac{2}{3}r_0^{(n)}z_0^{(n)} - \frac{1}{3}\left(r_1^{(n)}z_1^{(n)} + r_2^{(n)}z_2^{(n)}\right)|\; \mbox{substituting}\; z_0^{(n)}=-z_1^{(n)}-z_2^{(n)}\nonumber\\
&=|- \frac{2}{3}r_0^{(n)}\left(z_1^{(n)} + z_2^{(n)}\right) - \frac{1}{3}\left(r_1^{(n)}z_1^{(n)} + r_2^{(n)}z_2^{(n)}\right)|\nonumber\\
& > \left(\frac{2}{3}r_0^{(n)} + \frac{1}{3}\min_{i=1,2}r_i^{(n)}\right)|z_0^{(n)}|
\end{align}
Consequently, we have to show that (\ref{e.estimate1}) is greater than $\min_i|z_i^{(n)}|$: If $r_0^{(n)}$ is minimal, we easily get
$$  \left(\frac{2}{3}r_0^{(n)} + \frac{1}{3}\min_{i=1,2}r_i^{(n)}\right) \geq r_0^{(n)}\; \Rightarrow\; |z_0^{(n+1)}| > r_0|z_0^{(n)}|=|z_2^{(n)}| \geq \min |z_i^{(n)}|.$$
Otherwise, if $r_0^{(n)}$ is maximal, we certainly have -- utilizing the inequality of arithmetic and geometric mean
$$\left(\frac{2}{3}r_0^{(n)} + \frac{1}{3}\min_{i=1,2}r_i^{(n)}\right)\geq\left(\frac{1}{3}r_0^{(n)} + \frac{1}{3}r_1^{(n)} + \frac{1}{3}r_2^{(n)}\right)\geq \sqrt[3]{r_0^{(n)}r_1^{(n)}r_2^{(n)}} = 1, $$
finishing the proof for this case due to $|z_0^{(n+1)}| > |z_0^{(n)}|$. In the last case, if $r_0^{(n)}$ is neither maximal nor minimal, either $r_1^{(n)}$ or $r_2^{(n)}$ is maximal. Assume without loss of generality that $r_1^{(n)}$ is maximal: 
\begin{align*}
|z_0^{(n+1)}| &> \left(\frac{2}{3}r_0^{(n)} + \frac{1}{3}r_2^{(n)}\right)|z_0^{(n)}|\geq \sqrt[3]{r_0^{(n)}r_0^{(n)}r_2^{(n)}}|z_0^{(n)}| \\
&= \sqrt[3]{r_1^{(n)}r_1^{(n)}r_0^{(n)}}|z_1^{(n)}|\quad \mbox{utilizing}\; |z_0^{(n)}|=r_1^{(n)}|z_1^{(n)}|\; \mbox{and}\;r_0r_1r_2=1\\
&= \sqrt[3]{\frac{r_1^{(n)}}{r_2^{(n)}}}|z_1^{(n)}| \geq |z_1^{(n)}| \geq \min_i |z_i^{(n)}|,
\end{align*} 
finishing the proof.
\end{proof}
With the help of these two lemmas we can directly conclude Theorem~\ref{t.convergence}:
\begin{proof}[Proof of Theorem~\ref{t.convergence}]
For $i\in\mathbb{Z}_3$ consider the sequence $\left(r_i^{(n)}=\frac{|z_{i-1}^{(n)}|}{|z_i^{(n)}|}\right)_{n \geq 0}$. We have for $n\geq 0$ the following bounds from below and above: 
\begin{equation}\label{e.estimate2}
\frac{\min_i |z_i^{(n)}|}{\max_i |z_i^{(n)}|}\leq r_i^{(n)} \leq \frac{\max_i |z_i^{(n)}|}{\min_i |z_i^{(n)}|}.
\end{equation}
According to Lemmas~\ref{l.max} and \ref{l.min}, the sequence $\left(\frac{\max_i |z_i^{(n)}|}{\min_i |z_i^{(n)}|}\right)_{n\geq 0}$ is a strictly decreasing sequence bounded from below by $1$, so it converges to $1$; in the same way, the sequence $\left(\frac{\min_i |z_i^{(n)}|}{\max_i |z_i^{(n)}|}\right)_{n\geq 0}$ is a strictly increasing sequence bounded from above by $1$, so it also converges to $1$. 
These two results combine with (\ref{e.estimate2}) to $\lim_{n \rightarrow \infty} r_i^{(n)} =1$ for $i=0,1,2$ finishing the proof. 
\end{proof}
Theorem~\ref{t.convergence} directly gives us the required result for the geometric element transformation where we assume that $\Delta^n$ is non-degenerate, that is, the vertices are pairwise disjoint:
\begin{corollary}[Elementwise convergence]\label{c.convergence}
The triangle $\Delta^n=(z_0^{(n)}, z_1^{(n)}, z_2^{(n)})$ converges for $n \rightarrow \infty$ to an equilateral triangle. 
\end{corollary}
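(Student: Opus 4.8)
The plan is to derive the corollary from Theorem~\ref{t.convergence} in two steps: first I would extract a common limit for the three vertex-to-centroid distances, and then I would use the fact that the centroid is kept fixed at the origin to control the angles between the vertex vectors, which together pin down the side lengths.

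\emph{Step 1: a common limiting radius.} By Lemma~\ref{l.max} the sequence $\big(\max_i |z_i^{(n)}|\big)_n$ is strictly decreasing and bounded below by $0$, hence converges to some $M \geq 0$; by Lemma~\ref{l.min} the sequence $\big(\min_i |z_i^{(n)}|\big)_n$ is strictly increasing, hence converges to some $m$, and the non-degeneracy of $\Delta^0$ gives $m \geq \min_i |z_i^{(0)}| > 0$. Theorem~\ref{t.convergence} says $r_i^{(n)} \to 1$; since each quotient $|z_i^{(n)}|/|z_j^{(n)}|$ with $i \neq j$ is one of the $r_k^{(n)}$ or its reciprocal, all such quotients tend to $1$, so in particular $\max_i |z_i^{(n)}| / \min_i |z_i^{(n)}| \to 1$. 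As $\max_i |z_i^{(n)}| \geq \min_i |z_i^{(n)}| > 0$ for every $n$, this forces $M = m > 0$, and from $\min_i |z_i^{(n)}| \leq |z_i^{(n)}| \leq \max_i |z_i^{(n)}|$ I would conclude $|z_i^{(n)}| \to M$ for $i = 0,1,2$.

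\emph{Step 2: the angles, hence the side lengths.} The re-centering built into the transformation keeps the centroid at the origin, i.e. $z_0^{(n)} + z_1^{(n)} + z_2^{(n)} = 0$ for all $n$ (one also sees this by summing (\ref{e.transformation}) over $i \in \mathbb{Z}_3$). Writing $z_k^{(n)} = -\big(z_i^{(n)} + z_j^{(n)}\big)$ for $\{i,j,k\} = \{0,1,2\}$ and taking squared moduli, I would obtain $|z_k^{(n)}|^2 = |z_i^{(n)}|^2 + |z_j^{(n)}|^2 + 2\,\mathrm{Re}\big(z_i^{(n)}\overline{z_j^{(n)}}\big)$; letting $n \to \infty$ and inserting $|z_\ell^{(n)}| \to M$ yields $\mathrm{Re}\big(z_i^{(n)}\overline{z_j^{(n)}}\big) \to -M^2/2$ for each pair $i \neq j$. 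Then $|z_i^{(n)} - z_j^{(n)}|^2 = |z_i^{(n)}|^2 + |z_j^{(n)}|^2 - 2\,\mathrm{Re}\big(z_i^{(n)}\overline{z_j^{(n)}}\big) \to 3M^2$, so all three side lengths of $\Delta^n$ converge to $\sqrt{3}\,M$, which is exactly the assertion that $\Delta^n$ converges to an equilateral triangle.

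The point that needs the most care is not a computation but the reading of the conclusion: the individual vertices need not converge — the triangles may still drift or rotate — so ``converges to an equilateral triangle'' is to be understood as convergence of the shape, and this is precisely what the convergence of the three side lengths to the common value $\sqrt{3}\,M$ records. (If one insisted on convergence of the triple $(z_0^{(n)},z_1^{(n)},z_2^{(n)})$ itself, one would additionally have to rule out a residual rotation in the limit, which is not claimed.) The other thing to keep visible is the non-degeneracy hypothesis: it is what guarantees $M = m > 0$, so that the limiting equilateral triangle is genuine and not a single point, and it is also what keeps every ratio $r_i^{(n)}$ well defined along the orbit.
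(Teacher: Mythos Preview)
Your argument is correct and follows the same route as the paper's proof: use Theorem~\ref{t.convergence} (together with Lemmas~\ref{l.max} and \ref{l.min}) to make the three vertex-to-centroid distances coalesce, then read off equilaterality. Your version is simply more careful---you actually establish that the common limit $M$ exists and is positive, and you compute the side lengths explicitly via the centroid constraint rather than invoking the centroid-equals-circumcenter criterion as the paper does; your closing remark on shape convergence versus pointwise convergence is also a worthwhile clarification that the paper leaves implicit.
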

\begin{proof}[Proof of Corollary~\ref{c.convergence}]
As $r_i^{(n)}=|\frac{z_{i-1}^{(n)}}{z_i^{(n)}}|$ converges to $1$ with $n \rightarrow \infty$, we get that $\lim |z_i^{(n)}| = \lim |z_{j}^{(n)}|$ for $i,j=0,1,2$. Therefore, the distances from the vertices to the centroid $c$ become equal, so that $c$ becomes the circumcenter, and the triangle equilateral.  
\end{proof}

\section{Convergence of the smoothing algorithm for triangle meshes}\label{s.convergence}
Transformation~(\ref{t.getme}) can be used to transform a mesh of triangles by combining it at every vertex with taking the barycenter. We give the precise definition of the considered mesh transformation below after specifying in detail our setting. \\ 
We prove in this section that any triangle mesh which does not contain too pathological triangles converges under the transformation to a mesh of triangles as regular as possible. Let us make precise our setting: 
\subsection{Preliminary notations:}
Let $\Sigma=\left\{0,\dots,N-1\right\}$ be a finite set of symbols. Let \newline$C=\left\{\Delta_i=(i_0,i_1,i_2)\in \Sigma^3\,\big|\, i=0,\dots,n-1\right\}$ be a finite set of triples of symbols.  We call the set $C$ a \emph{connectivity} iff for any pair $\Delta_i, \Delta_j\in C$ there exist $k\leq n-1$ and a finite sequence $\Delta_0,\dots,\Delta_k \in C$ such that $\Delta_0=\Delta_i$ and $\Delta_k=\Delta_j$ and for $m=0,\dots,k-1$ the triples $\Delta_m$ and $\Delta_{m+1}$ have exactly two symbols in common. 
Let $M_C: \Sigma \rightarrow \mathbb{R}^2, k\mapsto x_k$ be an injective map. We call $M_C$ a \emph{triangle mesh with connectivity C} iff for any $i,j=0,\dots,n-1$, $i\neq j$ the triangles defined by $M_C(\Delta_i):=(M_C(i_0),M_C(i_1),M_C(i_2))$ and $M_C(\Delta_j)$, counted counter clockwisely, are non-degenerate and have disjoint interior. Let denote by $X_C$ the set of meshes $M_C$ with connectivity $C$. 
\begin{rem} As a consequence of the definition of connectivity, the set $\bigcup_{i=0}^{n-1}M_C(\Delta_i)$ is arcwise connected.
\end{rem} 
\subsection{Definition of the mesh transformation}
We define a triangle mesh transformation in two steps. First, for any $i=0,\dots,n-1$ we define for the triangle $M_C(\Delta_i)=(x_{i_0},x_{i_1},x_{i_2})$ with centroid $c_i$ the triangle transformation (as defined above) by  
\begin{align}\label{e.triangle_transformation}
&\theta_i:\mathbb{R}^6 \rightarrow \mathbb{R}^6\;\nonumber\\
&x_i:=(x_{i_0},x_{i_1},x_{i_2}) \mapsto (\theta_{i_0}(x_{i}),\theta_{i_1}(x_{i}),\theta_{i_2}(x_{i})),\nonumber\\
&\theta_{i_0}(x_{i})= \frac{2}{3}\frac{\left\|x_{i_2}-c_i\right\|}{\left\|x_{i_0}-c_i\right\|}(x_{i_0}-c_i) -\frac{1}{3}\frac{\left\|x_{i_0}-c_i\right\|}{\left\|x_{i_1}-c_i\right\|}(x_{i_1}-c_i)-\frac{1}{3}\frac{\left\|x_{i_1}-c_i\right\|}{\left\|x_{i_2}-c_i\right\|}(x_{i_2}-c_i) + c_i\nonumber\\
&\theta_{i_1}(x_{i})= \frac{2}{3}\frac{\left\|x_{i_0}-c_i\right\|}{\left\|x_{i_1}-c_i\right\|}(x_{i_1}-c_i) -\frac{1}{3}\frac{\left\|x_{i_1}-c_i\right\|}{\left\|x_{i_2}-c_i\right\|}(x_{i_2}-c_i)-\frac{1}{3}\frac{\left\|x_{i_2}-c_i\right\|}{\left\|x_{i_0}-c_i\right\|}(x_{i_0}-c_i)+c_i\nonumber\\
&\theta_{i_2}(x_{i})= \frac{2}{3}\frac{\left\|x_{i_1}-c_i\right\|}{\left\|x_{i_2}-c_i\right\|}(x_{i_2}-c_i) -\frac{1}{3}\frac{\left\|x_{i_2}-c_i\right\|}{\left\|x_{i_0}-c_i\right\|}(x_{i_0}-c_i)-\frac{1}{3}\frac{\left\|x_{i_0}-c_i\right\|}{\left\|x_{i_1}-c_i\right\|}(x_{i_1}-c_i)+c_i. 
\end{align}
We can now define the mesh transformation under consideration.  
For $k=0,\dots,N-1$ let $\Sigma_k$ denote the set of indices of the adjacent triangles at $x_k$ and we define the map 
\begin{align}\label{e.mesh_transformation}
\Theta:X_C \subset \mathbb{R}^{2N}&\rightarrow \mathbb{R}^{2N}\nonumber\\
x=(x_0,\dots,x_{N-1})&\mapsto (\Theta_0(x),\dots,\Theta_{N-1}(x)), \; x_k, \Theta_k(x) \in \mathbb{R}^2\nonumber, \\
 \mbox{with}\;&\Theta_k(x)=\frac{1}{|\Sigma_k|}\sum_{m \in \Sigma_k}t_{m_{j(m)}}(x_k),
\end{align}
where $j(m) \in {0,1,2}$ denotes the index of the vertex $x_k$ inside the triangle numbered by $m$, so $\theta_{m_{j(m)}}:\mathbb{R}^2 \rightarrow \mathbb{R}^2$.  
\begin{rem}
The map $\Theta$ is clearly well-defined as a map from $X_C$ to $\mathbb{R}^{2N}$, but not as a map to $X_C$: let $M_C(\Sigma)=(x_0,\dots,x_{N-1}) \in \mathbb{R}^{2N}$ be a triangle mesh with connectivity $C$. Then $\Theta(M_C(\Sigma))$ is not necessarily a triangle mesh $M'_C$. It could happen that the interiors of two triangles $\Theta(\Delta_i)$, $\Theta(\Delta_j)$ are no longer disjoint. 
\end{rem}
On the other hand note that a mesh of equilateral triangles is fixed under $\Theta$, such that we can prove the following lemma where we call  \emph{distortion} of a triangle the ratio of the shortest by the longest edge length of a triangle:
\begin{lemma}{Well-definedness of $\Theta$}\label{l.welldefinedness}
Let $\Theta$ be defined as above. Then there exists $0< \delta < 1$ such that for any triangle mesh $M_C$ whose distortion of triangles is bounded from below by $\delta$ the image $\Theta(M_C)$ is a triangle mesh $M'_C$. 
\end{lemma}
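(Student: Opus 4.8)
The plan is to regard $\Theta$ as a continuous self-map of the open set $X_C\subset\mathbb{R}^{2N}$ which is the identity on equilateral meshes, and to combine this with a compactness argument. First I would record the structural facts. \textbf{Openness of $X_C$:} a sufficiently small perturbation of a mesh $M_C$ keeps every triangle non-degenerate, keeps two triangles sharing an edge strictly on opposite sides of that edge, keeps two triangles meeting only at a vertex in disjoint angular sectors at that vertex, and keeps disjoint triangles at positive distance; hence the perturbed configuration is again a triangle mesh with connectivity $C$, so $X_C$ is open. \textbf{Continuity and equivariance of $\Theta$:} wherever all $n$ triangles are non-degenerate (in particular on $X_C$) the only denominators occurring in (\ref{e.triangle_transformation}) are the distances $\left\|x_{i_j}-c_i\right\|$, which are then bounded away from $0$, so $\Theta$ is continuous; and since each $\theta_i$ is assembled only from the centroid $c_i$, the vectors $x_{i_j}-c_i$ and the scale-invariant ratios $r$, the map $\Theta$ commutes with the diagonal action on $M_C$ of the group $G$ of similarities and reflections of $\mathbb{R}^2$. \textbf{Fixed points:} if the three vertices of a triangle are equidistant from its centroid then $r_0=r_1=r_2=1$ and each $\theta_{i_j}$ returns $x_{i_j}$, so a mesh of equilateral triangles is a fixed point of $\Theta$; more generally $\Theta$ moves vertices little when $M_C$ is close to being everywhere equilateral.

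By equivariance the statement is scale-invariant, so I would normalize, say so that $M_C(\Delta_0)$ has centroid at the origin and longest edge of length $1$, and let $K_\delta$ denote the set of normalized meshes with connectivity $C$ whose triangles all have distortion at least $\delta$. The key observation is that $C$ is fixed: its dual graph (triangles, adjacency $=$ common edge) is connected by definition of a connectivity, so "distortion $\ge\delta$" forces the ratio of any two edge lengths of a single triangle into $[\delta,\delta^{-1}]$ and, propagating along the dual graph, the ratio of any two edge lengths of the whole mesh into an interval $[1/\kappa(\delta,C),\kappa(\delta,C)]$ with $\kappa(\delta,C)\to1$ as $\delta\to1$. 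Together with the normalization this confines all edge lengths, hence all vertices, to a fixed compact set as soon as $\delta>1/2$ (and $\delta>1/2$ also makes every triangle uniformly non-degenerate, since a degenerate triangle has distortion $\le1/2$). Thus $\overline{K_\delta}$ is a compact set of non-degenerate configurations for $\delta>1/2$. Since distortion is continuous and $\le1$ with equality only for equilateral triangles, a standard subsequence argument then shows: for every neighbourhood $\mathcal{N}$ of the compact set $\mathcal{E}$ of normalized configurations all of whose triangles are equilateral and (necessarily) congruent, there is $\delta<1$ with $K_\delta\subseteq\mathcal{N}$; and, $\Theta$ being uniformly continuous on $\overline{K_{2/3}}$ and equal to the identity on $\mathcal{E}$, the same compactness yields a modulus $\eta(\delta)\to0$ as $\delta\to1$ with $\left\|\Theta(M)-M\right\|\le\eta(\delta)$ for all $M\in K_\delta$.

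To conclude it would suffice to find a uniform $\rho>0$ and $\delta_0(C)<1$ with $\dist(M,\partial X_C)\ge\rho$ for all $M\in K_{\delta_0}$: then choose $\delta\in[\delta_0,1)$ with $\eta(\delta)<\rho$, and for an arbitrary (un-normalized) mesh of distortion $\ge\delta$ pass to its normalization $g\cdot M$, note $\left\|\Theta(g\cdot M)-g\cdot M\right\|<\rho\le\dist(g\cdot M,\partial X_C)$ so that $\Theta(g\cdot M)\in X_C$ because $X_C$ is open, and transport back by $g^{-1}$ using equivariance; this gives exactly the asserted $\delta$. A normalized mesh can be close to $\partial X_C$ in only two ways: a triangle degenerates, or two triangles come close to overlapping. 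The first is excluded quantitatively by $\delta>1/2$. \textbf{The main obstacle is the second:} one must rule out that a near-equilateral planar mesh, while staying embedded, can nearly "pinch" itself or nearly "wind around". The mechanism is again that the connectivity is fixed, so there are only $n$ triangles and the total deviation from equilaterality is at most a constant times $1-\delta$; hence the mesh can curve only by a bounded amount and, for $\delta$ close enough to $1$, cannot come within $\rho$ of self-touching. Concretely I would argue by contradiction: a sequence in $K_{1-1/k}$ with $\dist(\,\cdot\,,\partial X_C)\to0$ would subconverge to a configuration in $\mathcal{E}\cap\partial X_C$, so the remaining task is the rigidity statement $\mathcal{E}\cap\partial X_C=\emptyset$, i.e. that any equilateral configuration arising as a limit of genuine meshes is itself an embedded mesh. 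This is the point that requires the real work, and the only place where planarity and the specific combinatorics of $C$ enter in an essential way.
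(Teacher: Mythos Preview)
Your route is the paper's route: $X_C$ open, $\Theta$ continuous, equilateral meshes fixed, equivariance under $\Sim(\mathbb{R}^2)$, hence a similarity-invariant neighbourhood of the equilateral locus is mapped by $\Theta$ back into $X_C$. The paper compresses all of this into four sentences: pick one equilateral mesh $M_{eq}$, use continuity to get an open $\mathcal{U}\ni M_{eq}$ with $\Theta(\mathcal{U})\subset X_C$, and then simply assert that the orbit $\Sim(\mathbb{R}^2).\mathcal{U}$ contains all meshes of distortion $\ge\delta$ for some $\delta<1$.

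What your longer argument buys is honesty about the one nontrivial step the paper skips. The assertion ``$\Sim(\mathbb{R}^2).\mathcal{U}$ contains all meshes of distortion $\ge\delta$'' is exactly your compactness-plus-rigidity statement: after normalization, meshes with distortion $\to 1$ must accumulate on $\mathcal{E}$, and $\mathcal{E}\subset X_C$ (equivalently $\mathcal{E}\cap\partial X_C=\emptyset$). You are right that this is where the actual content lies and that it is not automatic; the paper does not address it. Note also that both your argument and the paper's silently presuppose that an equilateral mesh with connectivity $C$ exists at all, which already fails for many of the connectivities treated later in the paper (the $N$-simple meshes with $N\neq 7$); in those cases the hypothesis ``distortion $\ge\delta$'' is vacuous for $\delta$ near $1$, so the lemma holds trivially, but neither proof says so.
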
 
We postpone the proof to Subsection~\ref{s.similarity}. 

\subsection{Similarity group action and equivariance of $\Theta$}\label{s.similarity}
Denote by $\Sim(\mathbb{R}^2)$ the four-dimensional group of similarities of $\mathbb{R}^2$ composed by the one-dimensional group $\mathbb{R}^+$ of scaling and the three-dimensional group $\isom(\mathbb{R}^2)$ of isometries. This groups naturally acts on the set of triangles by 
$$\Sim(\mathbb{R}^2) \times (\mathbb{R}^2)^3 \rightarrow (\mathbb{R}^2)^3; (g, x)\mapsto g.x=(g.x_0,g.x_1,g.x_2)$$
for any triangle $x=(x_0,x_1,x_2) \in \mathbb{R}^6$ where there exists $A \in \So(2,\mathbb{R}^2)$, $\theta \in \mathbb{R}^2$ and $\lambda \in \mathbb{R}^+$ such that $g.x_i=\lambda(Ax_i + \theta)$ for $i=0,1,2$.
\begin{rem}
Two triangles $x,y$ are \emph{similar} iff there exists $g \in \Sim(\mathbb{R}^2)$ such that $g.x=y$. \\
\end{rem}
One can easily prove that the group $\Sim(\mathbb{R}^2)$ acts freely on the set of triangles:
\begin{lemma}
The group action as defined above is free.
\end{lemma}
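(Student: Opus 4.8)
The plan is to show that the only element of $\Sim(\mathbb{R}^2)$ fixing a (non-degenerate) triangle $x=(x_0,x_1,x_2)$ is the identity; since the action is by affine maps, freeness is equivalent to this statement about stabilizers. So suppose $g.x=x$, i.e. $\lambda(Ax_i+\theta)=x_i$ for $i=0,1,2$, with $A\in\So(2)$, $\theta\in\mathbb{R}^2$, $\lambda>0$.

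First I would dispose of the scaling factor. Subtracting the equation for $i=0$ from the one for $i=1$ gives $\lambda A(x_1-x_0)=x_1-x_0$, and similarly $\lambda A(x_2-x_0)=x_2-x_0$. Taking norms and using that $A$ is an isometry yields $\lambda\|x_1-x_0\|=\|x_1-x_0\|$; since the triangle is non-degenerate, $x_1\neq x_0$, so $\lambda=1$. Thus $g$ is in fact an isometry: $A(x_i-x_0)=x_i-x_0$ for all $i$, so $A$ fixes both $v:=x_1-x_0$ and $w:=x_2-x_0$.

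Next I would use non-degeneracy again: $v$ and $w$ are linearly independent (they span $\mathbb{R}^2$, since the three points are not collinear). A rotation $A\in\So(2)$ fixing two linearly independent vectors must fix all of $\mathbb{R}^2$, hence $A=\id$. (Concretely, the only rotation of the plane with a nonzero fixed vector is the identity, and $v\neq 0$ already forces this.) With $A=\id$ and $\lambda=1$, the relation $\lambda(Ax_0+\theta)=x_0$ collapses to $\theta=0$. Therefore $g=\id$, which proves the stabilizer of every non-degenerate triangle is trivial, i.e. the action is free.

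There is no real obstacle here; the one point to be careful about is making explicit where non-degeneracy of the triangle enters — it is used twice, once to get $\lambda=1$ (some edge has positive length) and once to get $A=\id$ (the edge vectors are linearly independent) — and to recall the elementary fact that a planar rotation with a nonzero fixed vector is the identity. All computations are routine, so the write-up can be kept to a few lines.
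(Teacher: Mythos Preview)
Your proof is correct and follows essentially the same route as the paper: subtract the fixed-point equations to kill the translation, use the norm to pin down $\lambda=1$, observe that $A\in\So(2)$ has a nonzero fixed vector and hence is the identity, and finally recover $\theta=0$. If anything, your write-up is a bit more explicit about where non-degeneracy is used and why $\lambda=1$, whereas the paper simply asserts the latter ``immediately.''
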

\begin{proof}
Let $A \in \So(2,\mathbb{R}), \theta \in \mathbb{R}^2$ and $\lambda \in \mathbb{R}^+$ such that $\lambda(Ax + \theta)=x$ for a triangle $x=(x_0,x_1,x_2) \in \mathbb{R}^6$. This implies immediately that $\lambda=1$. So we have $Ax_0 + \theta = x_0$ and $Ax_1 + \theta=x_1$. We conclude that $A(x_0-x_1)=x_0-x_1$. This means that $x_0-x_1$ is the eigenvector to an eigenvalue $1$ of $A$.  So we can conclude that $A$ is the identity. Consequently, we get $\theta=0$ finishing the proof.
\end{proof} 
The action defined above can be straightforwardly generalized to the set $X_C$ of meshes with connectivity $C$ by 
$$(g,x) \in \Sim(\mathbb{R}^2) \times (\mathbb{R}^2)^N\,\mapsto \, (g.x_0, g.x_1,\dots,g.x_{N-1})$$ 
for any mesh $M_C(\Sigma)=:x \in (\mathbb{R}^2)^N$. This action is certainly free as well. 
\begin{rem}
One could think of defining the similarity group action on a mesh separately on every triangle. But in fact, the connectivity as defined above forces that the same group element acts simultaneously on each triangle of the mesh. So the group action defined above is the only one in accordance with the given definition of a mesh.   
\end{rem}
As a consequence we can list the following properties of the group action: 
\begin{enumerate}  
\item Every $\Sim(\mathbb{R}^2)$-orbit is a four-dimensional smooth submanifold in $\mathbb{R}^{2N}$.
\item One computes immediately that the mesh transformation $\Theta$ is \emph{equivariant} under the group action of $\Sim(\mathbb{R}^2)$, that is 
\begin{enumerate}
\item For every $M_C$ inside the domain of $\Theta$ the image $\Theta(M_C)$ lies as well in the domain.  
\item $$\Theta(g.M_C) = g.\Theta(M_C) \quad \mbox{for any} \;M_C \in X_C \; \mbox{and}\; g \in \Sim(\mathbb{R}^2).$$
\end{enumerate} 
(see e.g. \cite{F80} where important properties for equivariant dynamical systems are proved).
\item For any $h \in \Sim(\mathbb{R}^2)$ one computes for the Jacobian matrix of $\Theta$ for any $M_C \in X_C$ 
$$\Theta=h^{-1} \circ \Theta \circ h \; \Leftarrow \; D\Theta_{M_C}=Dh^{-1} \circ D\Theta_{h.M_C} \circ Dh,$$
and as $h,h^{-1}$ are linear maps one gets
$$ D\Theta_{M_C}=h^{-1} \circ D\Theta_{h.M_C} \circ h.$$
\end{enumerate}
Thanks to these properties we can reduce the question of global convergence to the following: 
Let $M_C$ be a fixed point of $\Theta$, that is $\Theta(M_C)=M_C$, then the whole group orbit $\Lambda:=\Sim(\mathbb{R}^2).M_C$ is fixed and $\Lambda$ is consequently a four-dimensional submanifold of fixed points. \\
Now we can prove Lemma~\ref{l.welldefinedness}:
\begin{proof}[Proof of Lemma~\ref{l.welldefinedness}]
Let $M_{eq}$ be an equilateral mesh, then we have $\Theta(M_{eq})=M_{eq}$. On the other hand, the domain of $\Theta$ is clearly an open subset of $X_C$. By the continuity of $\Theta$, there exists an open set $\mathcal{U}$ of triangles sufficiently close to $M_{eq}$ such that $\Theta$ is well-defined on $\mathcal{U}$. The equivariance of $\Theta$ implies that $\Theta$ is well-defined on the group orbit $\Sim(\mathbb{R}^2).\mathcal{U}$ of $\mathcal{U}$ which contains all meshes whose distortion of triangles is bounded by some $ 0<\delta < 1$ which ends the proof.   
\end{proof}
To study the convergence in a neighborhood of the fixed point $M_C$ it is enough to study the dynamics of $\Theta$ in a neighborhood of $\Lambda$ thanks to the following lemma: 
\begin{lemma}\label{l.convergence}
Let $x=M_C\in\mathbb{R}^{2N}$ be a fixed point of $\Theta$ and $\Lambda:=\Sim(\mathbb{R}^2).M_C$ its group orbit. If there exists a $D\Theta$-invariant decomposition of the tangent bundle at $\Lambda$ $$T\mathbb{R}^{2N}|_{\Lambda}=T\Lambda \oplus E^s$$
such that $\left\|D\Theta|_{E^s}\right\| < 1$, then there exists a unique family $\mathcal{F}^s$ of injectively $C^r$-immersed submanifolds $\mathcal{F}^s(x)$ such that $x \in \mathcal{F}^s(x)$ and $\mathcal{F}^s(x)$ is tangent to $E^s_x$ at every $x \in \Lambda$. This family is $\Theta$-invariant, that is, $\Theta(\mathcal{F}^s(x))= \mathcal{F}^s(\Theta(x))$, and the manifolds $\mathcal{F}^s(x)$ are uniformly contracted by some iterate of $\Theta$. \\
That family actually forms a foliation of a neighborhood of $\Lambda$.  
\end{lemma}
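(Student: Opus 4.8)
The plan is to recognize $\Lambda$ as a normally attracting invariant submanifold of $\Theta$ with trivial interior dynamics, and to obtain $\mathcal{F}^s$ as the stable foliation produced by the invariant manifold theory of Hirsch--Pugh--Shub and Fenichel; the only non-routine point is that $\Lambda$ is \emph{not} compact, which we handle by exploiting the equivariance established in Subsection~\ref{s.similarity}.

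First I would pin down the rate structure along $\Lambda$. Since $\Theta$ fixes every point of $\Lambda$, we have $D\Theta_x|_{T_x\Lambda}=\mathrm{id}$ for all $x\in\Lambda$, so both the tangential contraction and the tangential expansion rate of $D\Theta$ along $\Lambda$ are exactly $1$, while by hypothesis the normal rate is $\mu:=\|D\Theta|_{E^s}\|<1$. Thus $\mu<1$ dominates every power of the tangential rate, i.e. $\Lambda$ is $r$-normally hyperbolic (with empty unstable bundle) for every $r\ge 1$. Since $\Theta$ is real-analytic, hence $C^r$ for all $r$, on the open set of $X_C$ where it is defined, the invariant manifold theorem then yields a $C^r$ stable foliation for each $r$, which is the family $\mathcal{F}^s$ in the statement.

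Next I would deal with the noncompactness of $\Lambda$. The group $\Sim(\mathbb{R}^2)$ acts freely on $X_C$ and transitively on $\Lambda$, since $\Lambda$ is a single orbit; and by the equivariance relation $D\Theta_{g.x}=Dg\circ D\Theta_x\circ Dg^{-1}$ with $Dg$ a conformal linear isomorphism, the splitting $T\Lambda\oplus E^s$, the contraction bound $\mu$, and all metric quantities entering the estimates are $\Sim(\mathbb{R}^2)$-invariant along $\Lambda$. This homogeneity is exactly the ``bounded geometry with uniform rates'' hypothesis under which the Hirsch--Pugh--Shub/Fenichel construction carries over to noncompact normally hyperbolic manifolds (see also \cite{F80} for the equivariant setting); equivalently one may push everything down to the one-point orbit space $\Lambda/\Sim(\mathbb{R}^2)$ and lift the resulting local objects back by the action. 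By Lemma~\ref{l.welldefinedness} a $\Sim(\mathbb{R}^2)$-invariant tubular neighborhood $\mathcal{N}$ of $\Lambda$ on which $\Theta$ is defined is available --- take the orbit of a small neighborhood of the equilateral mesh $M_{eq}$.

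Then I would run the graph transform. Modelling $\mathcal{N}$ on the bundle $E^s$, write points near $\Lambda$ as pairs $(x,v)$ with $x\in\Lambda$ and $v\in E^s_x$ small, and take as candidate leaves through $x$ the graphs over $E^s_x$ of maps into a complement of $T_x\Lambda$ having a uniformly small Lipschitz constant (a cone condition, transverse to $\Lambda$). For the $\delta$ of Lemma~\ref{l.welldefinedness} close enough to $1$, the local inverse branches of $\Theta$ define a graph transform $\mathcal{G}$ on this space; the identity action on $T\Lambda$ together with $\mu<1$ make $\mathcal{G}$ preserve the cone class and contract it in the uniform $C^0$ metric with factor comparable to $\mu$. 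Its unique fixed point is $\mathcal{F}^s$; the relations $\Theta(\mathcal{F}^s(x))=\mathcal{F}^s(\Theta(x))$ and the uniform contraction of each leaf by an iterate of $\Theta$ are built into the construction, $C^r$-smoothness of the leaves follows from the fiber-contraction / $C^r$-section theorem, and uniqueness is immediate because any $\Theta$-invariant family tangent to $E^s$ along $\Lambda$ satisfies the cone condition near $\Lambda$ and is therefore a fixed point of $\mathcal{G}$. Distinct leaves are disjoint --- two intersecting leaves would, by invariance and transversality to $\Lambda$, meet $\Lambda$ in the same point --- and every point of $\mathcal{N}$ lies on exactly one leaf, since each leaf meets $\Lambda$ transversally in a single point; this gives the asserted foliation of a neighborhood of $\Lambda$. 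I expect the main difficulty to be precisely the noncompactness of $\Lambda$: the classical stable-foliation theorems are stated for compact normally hyperbolic manifolds, and it is the transitive, conformal $\Sim(\mathbb{R}^2)$-action --- rendering all hyperbolicity constants and the tubular geometry uniform along $\Lambda$ --- that makes the non-compact version applicable here.
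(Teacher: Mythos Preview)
Your proposal is correct and follows the same route as the paper: the paper's entire proof is the single sentence ``This Lemma is an immediate application of the invariant manifold theorem by Hirsch, Pugh and Shub, cited and proved for example in \cite[Th.~B7, p.~293]{BDV}.'' You invoke the same HPS machinery but spell out considerably more detail; in particular, your treatment of the noncompactness of $\Lambda$ via the transitive $\Sim(\mathbb{R}^2)$-action and the resulting uniformity of all hyperbolicity constants is a point the paper's one-line proof does not address at all.
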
 
This Lemma is an immediate application of the invariant manifold theorem by Hirsch,Pugh and Shub, cited and proved for example in \cite[Th.B7, p.293]{BDV}.
The spectrum $\spec(D\Theta|_{\Lambda})$ tangent to the group orbit contains four eigenvalues equal to $1$. Consequently, we have the following direct corollary of Lemma~\ref{l.convergence}:
\begin{corollary}\label{c.convergence}
Let $M_C\in\mathbb{R}^{2N}$ be a fixed point of $\Theta$ and $\Lambda:=\Sim(\mathbb{R}^2).M_C$ its group orbit. If every eigenvalue of the Jacobian matrix $D\Theta$ which is not contained in $\spec(D\Theta|_{\Lambda})$ has an absolute value strictly smaller than $1$, then $\Lambda$ is an attractor and $\Theta^n(M)$ converges uniformly at exponential rate to \textbf{one} point in $\Lambda$ for $n \rightarrow \infty$ and for any triangle mesh $M$ sufficiently close to $\Lambda$.
\end{corollary}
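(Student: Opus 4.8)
The plan is to derive the corollary from Lemma~\ref{l.convergence}, that is, from the Hirsch--Pugh--Shub invariant manifold theorem, with the $\Sim(\mathbb{R}^2)$-equivariance of $\Theta$ doing the work of producing the splitting required along $\Lambda$ and of making every estimate uniform. First I would fix the invariant splitting. Because every point of $\Lambda$ is fixed by $\Theta$, the restriction $\Theta|_\Lambda$ is the identity, so $D\Theta$ preserves $T\Lambda$ and acts there as $\id$; these are the four eigenvalues equal to $1$ in $\spec(D\Theta|_\Lambda)$ (the orbit is $4$-dimensional since the action is free). The hypothesis that every eigenvalue of $D\Theta$ outside $\spec(D\Theta|_\Lambda)$ has modulus $<1$ then forces $D\Theta_{M_C}$ to have the eigenvalue $1$ with algebraic multiplicity exactly $\dim\Lambda=4$ and all remaining $2N-4$ eigenvalues of modulus $<1$; in particular the generalized eigenspace of $D\Theta_{M_C}$ for the eigenvalue $1$ contains $T_{M_C}\Lambda$ and has the same dimension, hence equals it. I would let $E^s_{M_C}$ be the sum of the generalized eigenspaces of $D\Theta_{M_C}$ for the eigenvalues of modulus $<1$, so that $\mathbb{R}^{2N}=T_{M_C}\Lambda\oplus E^s_{M_C}$ is $D\Theta_{M_C}$-invariant, and then propagate this over the orbit by equivariance, setting $E^s_{g.M_C}:=Dg\,(E^s_{M_C})$ for $g\in\Sim(\mathbb{R}^2)$. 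This is well defined because the action is free, depends smoothly on the base point, and is $D\Theta$-invariant thanks to the conjugacy relation $D\Theta_{M_C}=h^{-1}\circ D\Theta_{h.M_C}\circ h$ noted above; hence $T\mathbb{R}^{2N}|_\Lambda=T\Lambda\oplus E^s$ is a $D\Theta$-invariant decomposition.

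Next I would arrange the contraction estimate. Since the spectral radius of $D\Theta_{M_C}|_{E^s_{M_C}}$ is $<1$, there is an adapted (Lyapunov) inner product on $E^s_{M_C}$ in which its operator norm is $<1$; transporting this inner product along $\Lambda$ by the isomorphisms $Dg$ and using equivariance once more gives a metric on $E^s$ in which $\|D\Theta|_{E^s}\|<1$ \emph{uniformly} over $\Lambda$. I would also observe that $\Theta$ is $C^r$ (in fact real-analytic) on a neighborhood of $\Lambda$, the only loss of smoothness in the element transformation~\eqref{t.getme} occurring at degenerate triangles, which are absent near an equilateral mesh (cf.\ Lemma~\ref{l.welldefinedness}). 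As $D\Theta|_{T\Lambda}=\id$ has conorm $1$, the inequality $\|D\Theta|_{E^s}\|<1$ is precisely the normal-contraction hypothesis of Lemma~\ref{l.convergence} (its ``normally attracting'' case), so all of its conclusions are available.

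Finally I would conclude. Lemma~\ref{l.convergence} produces a $\Theta$-invariant $C^r$ foliation $\{\mathcal{F}^s(x)\}_{x\in\Lambda}$ of a neighborhood $\mathcal{N}$ of $\Lambda$, with $\mathcal{F}^s(x)$ tangent to $E^s_x$ and each leaf uniformly contracted by some iterate $\Theta^N$. Shrinking $\mathcal{N}$, each leaf $\mathcal{F}^s(x)$ with $x\in\Lambda$ meets $\Lambda$ only in $x$; for such $x$ the invariance gives $\Theta(\mathcal{F}^s(x))=\mathcal{F}^s(\Theta(x))=\mathcal{F}^s(x)$, so $\Theta^N$ is a contraction of $\mathcal{F}^s(x)$ into itself whose unique fixed point is $x$, and therefore every point of $\mathcal{F}^s(x)$ converges to $x$ under iteration of $\Theta$ at a geometric rate governed by the uniform constant of the previous step. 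Since any mesh $M\in\mathcal{N}$ lies on exactly one such leaf, $\Theta^n(M)$ converges to a single point of $\Lambda$ exponentially fast, with a rate independent of $M$; together with the trapping neighborhood supplied by the uniform contraction this says precisely that $\Lambda$ is an attractor, as claimed.

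The step I expect to be the main obstacle is the uniformity demanded by Lemma~\ref{l.convergence}: the orbit $\Lambda$ is \emph{not} compact --- it is diffeomorphic to the non-compact group $\Sim(\mathbb{R}^2)$ --- so one cannot extract uniform hyperbolicity from a compactness argument. This is exactly where equivariance is indispensable: it reduces the transverse linearized dynamics at every point of $\Lambda$ to the single linear map $D\Theta_{M_C}$ up to linear conjugacy, so that the spectral estimate at the one point $M_C$ propagates to a uniform bound along all of $\Lambda$. Everything else --- the passage to an adapted norm, the smoothness of $\Theta$, and the bookkeeping with the foliation --- is routine.
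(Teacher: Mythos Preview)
Your proposal is correct and follows exactly the route the paper intends: the paper states the result as a ``direct corollary'' of Lemma~\ref{l.convergence} without further argument, and what you have written is a careful unpacking of that sentence---constructing the $D\Theta$-invariant splitting $T\mathbb{R}^{2N}|_\Lambda=T\Lambda\oplus E^s$ from the spectral hypothesis, transporting it along $\Lambda$ by equivariance, passing to an adapted norm, and then reading off convergence to a single point from the stable foliation. Your explicit flag that $\Lambda$ is non-compact and that equivariance (rather than compactness) is what supplies the uniform contraction constant is a point the paper leaves implicit but which genuinely needs to be said for the appeal to Hirsch--Pugh--Shub to be legitimate.
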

So as a consequence of this corollary, it is enough to study the spectrum of $D\Theta$ at a fixed point and to prove that the absolute value of all eigenvalues except from four is strictly smaller than one. Nevertheless, this is still a difficult task as it will become obvious in the following. We start with the easiest cases gaining more and more complexity:
\newpage
\subsection{Convergence for particular cases}
\subsubsection{Case 1: A single triangle}
We start with the easiest case of a mesh which consists of a single triangle, so in fact, we study the global convergence of the previously defined triangle transformation $\theta:\mathbb{R}^6 \rightarrow \mathbb{R}^6$ on a triangle $x=(x_0,x_1,x_2) \in (\mathbb{R}^2)^3$ in more details and using the new setting above. 
Let $x_{eq}\in \mathbb{R}^6$ be an equilateral triangle, then $\theta(x_{eq})=x_{eq}$. The equivariance of $\theta$ under the group of similarities provokes that $$\theta(\Lambda_{eq}) = \Lambda_{eq},\;\mbox{where}\; \Lambda_{eq}=\left\{x \in \mathbb{R}^6 \;\big|\; g \in \Sim(\mathbb{R}^2): \; x=g.x_{eq},\; \right\}.$$  
is a $4$-dimensional submanifold of $\mathbb{R}^6$ which is $\theta$-invariant, that is, $\theta(\Lambda_{eq}) \subset \Lambda_{eq}$. Following Corollary~\ref{c.convergence} we compute the derivative $D\theta_{x_{eq}}$ of $\theta$ at $x_{eq} \in \Lambda_{eq}$. The Jacobian matrix is the same matrix $J$ for any $x_{eq} \in \Lambda_{eq}$: 
\begin{align}\label{e.jacobian}
Dt(x_{eq}) &= \begin{pmatrix}A & B & C \\C & A&B\\B & C & A\end{pmatrix}=:J\quad \mbox{where}\nonumber \\
A&=\begin{pmatrix}\frac{3}{4}& -\frac{1}{4\sqrt{3}}\\\frac{1}{4\sqrt{3}}& \frac{3}{4}\end{pmatrix},\; B=\begin{pmatrix}\frac{1}{4}& -\frac{1}{4\sqrt{3}}\\ \frac{1}{4\sqrt{3}}&\frac{1}{4}\end{pmatrix},\; C=\begin{pmatrix}0& \frac{1}{2\sqrt{3}}\\-\frac{1}{2\sqrt{3}}& 0\end{pmatrix}. 
\end{align}
Remark that $J$ is a \emph{circulant} block matrix. Further, $J$ is conjugate to the block diagonal matrix $(\frac{1}{2}R_{\pi/3}, \id_{\mathbb{R}^4})$ where $R_{\pi/3}:\mathbb{R}^2 \rightarrow \mathbb{R}^2$ is a rotation by $\pi/3$. Accordingly, there exists a $2$-dimensional subspace $E^s$ spanned by the eigenvectors $v_1,v_2$ corresponding to the two eigenvalues $\neq 1$. There exists $c > 0$ constant such that for any $v \in E^s_x$, $x \in \Lambda_{eq}$, one has 
$$\left\|Dt_xv\right\| \leq \frac{c}{2} \left\|v\right\|.$$
The four eigenvectors $v_3,\dots,v_6$ corresponding to eigenvalues $1$ span the tangent space of $\Lambda_{eq}$. So -- applying Corollary~\ref{c.convergence} -- the invariant set $\Lambda_{eq}$ is an \emph{attractor} for $\theta$. Hence, there exists a neighborhood $U_{eq} \supset \Lambda_{eq}$ such that every $x \in U_{eq}$ converges to $\Lambda_{eq}$ under iterates of $\theta$, that is, 
$$\dist(\theta^nx,\Lambda_{eq}) \rightarrow 0,\quad n \rightarrow \infty.$$
Taking into account Theorem~\ref{t.convergence} one concludes that $\Lambda_{eq}$ is a \emph{global attractor}.
\begin{rem}
If one considers the orbit space of the free group action $\isom(\mathbb{R}^2)$ on $\mathbb{R}^6$ by identifying similar triangles, one observes that this space is the two-dimensional projective space $P^2(\mathbb{R})$. By the observation above the triangle transformation passes to a well defined map on this quotient space:
\begin{equation*}
  \xymatrix{
\mathbb{R}^6 \ar[d]^p \ar[r]^t &\mathbb{R}^6\ar[d]^p\\
P^2(\mathbb{R}) \ar[r]^t &P^2(\mathbb{R})}
 \end{equation*}
 The attractor $\Lambda_{eq}$ projects to a globally attracting fixed point on $P^2(\mathbb{R})$. By the equivariance of the transformation $\theta$, the two-dimensional stable set tangent to $E^s$ passes also to a well-defined two-dimensional set in the orbit space reflecting the attraction of the fixed point.  
\end{rem}
\subsubsection{Case 2: mesh of six equilateral triangles}
Let $\Sigma=\left\{0,\dots,6\right\}$ and $C=\left\{(0,1,2), (0,2,3),(0,3,4),(0,4,5),(0,5,6),(0,6,1)\right\}$ be the connectivity and denote by $x_{eq}=(x_0,\dots,x_6)\in (\mathbb{R}^2)^7$ the mesh of six equilateral triangles. The group orbit $\Lambda_{eq}$ under the similarity group action is -- exactly as above -- a $4$-dimensional smooth submanifold of $\mathbb{R}^{14}$, and every mesh $x_{eq} \in \Lambda_{eq}$ is certainly a fixed point of the mesh transformation $\Theta$. We compute -- with the notations above -- the Jacobian matrix of $\Theta$ for $x_{eq}\in \Lambda_{eq}$ as  
\begin{align}\label{e.meshjacobian6}
&D\Theta(x_{eq})=\nonumber\\
&\begin{pmatrix}A & \frac{1}{6}(B+C)^T & \frac{1}{6}(B+C)^T & \frac{1}{6}(B+C)^T & \frac{1}{6}(B+C)^T & \frac{1}{6}(B+C)^T & \frac{1}{6}(B+C)^T \\\frac{1}{2}B & A &\frac{1}{2}B^T & 0 & 0 & 0 &\frac{1}{2}C^T\\ \frac{1}{2}B & \frac{1}{2}C^T & A &\frac{1}{2}B^T & 0 & 0 & 0\\ \frac{1}{2}B & 0 & \frac{1}{2}C^T & A &\frac{1}{2}B^T & 0 & 0 \\\frac{1}{2}B & 0 & 0 &\frac{1}{2}C^T & A &\frac{1}{2}B^T & 0 \\ \frac{1}{2}B & \frac{1}{2}B^T & 0 & 0 & \frac{1}{2}C^T & A & \frac{1}{2}B^T\\\frac{1}{2}B & 0 & 0 & 0 &0 & \frac{1}{2}C^T & A \end{pmatrix}.
\end{align} 
The matrix $D\Theta(x_{eq})$ has four eigenvalues $1$ whose eigenvectors span the $4$-dimensional tangent space of $\Lambda_{eq}$. Further, we have five pair of complex conjugate eigenvalues $\lambda_1,\overline{\lambda}_1,\dots, \overline{\lambda}_5$ with absolute values $\left|\lambda_i\right| \in [0.5774,0.8780]$. Consequently, the tangent space at $x_{eq} \in \Lambda_{eq}$ splits into a ten dimensional space $E^s(x_{eq})$ spanned by the eigenvectors $v_1,\overline{v}_1,\dots,\overline{v}_5$ and a $4$-dimensional eigenspace $T_{x_{eq}}\Lambda_{eq}$ of the equivalence relation: $$T_{\Lambda_{eq}}\mathbb{R}^{14} = E^s(\Lambda_{eq}) \oplus T\Lambda_{x_{eq}}.$$ So we can apply Corollary~\ref{c.convergence} and conclude that $\Lambda_{eq}$ is a local attractor, and consequently, there exists a neighborhood $\Lambda_{eq} \subset U_{eq} \subset \mathbb{R}^{14}$ such that every mesh $x=\mathcal{M}_C \in U_{eq}$ converges uniformly to one mesh $x_{eq}$ under $\Theta$: $$\dist(\Theta^n(x),x_{eq}) \rightarrow_{n \rightarrow \infty} 0 \quad x \in U_{eq}.$$ 
\begin{rem}
In contrast to the case of the triangle transformation we cannot prove that $\Lambda_{eq}$ is a global attractor: one observes numerically that $D\Theta(x)$ for $x \in X_C$ might have eigenvalues of absolute value $>1$, that is, there are directions in which $x$ is expanded. Numerical tests show, that after one or two iterations of $\Theta$, $x$ comes sufficiently close to $x_{eq}$ such that it converges uniformly to $x_{eq}$.  
\end{rem}
\subsubsection{Case 3: simple meshes}
let $\Sigma=\left\{0,\dots,N-1\right\}$ be a set of $N$ symbols. We call a connectivity $C$ $N$-\emph{simple} iff all triples $(i_0,i_1,i_2) \in C$ has a common symbol. We call $M_C$ a \emph{$N$-simple mesh} iff its connectivity $C$ is $N$-simple.\\
Above, we consider -- in this terminology -- a $6$-simple mesh. For a $N$-simple mesh, one fixed point of $\Theta$ is the mesh defined by the vertices $x_0=(0,0)$ and $x_{k-1}=(\cos(2k\pi/(N-1)), \sin(2k\pi/(N-1))$ for $k=2,\dots,N$. Let denote the similarity group orbit of this mesh by $\Lambda_{eq,N}$. 
We can then numerically compute the Jacobian matrix for $x_{eq}$ and show their spectra in Figure~\ref{fig:simplemesh} for $N=4,\dots,11$.
\begin{figure}[htpb]

\begin{minipage}{0.47\textwidth}
\includegraphics[width=\textwidth]{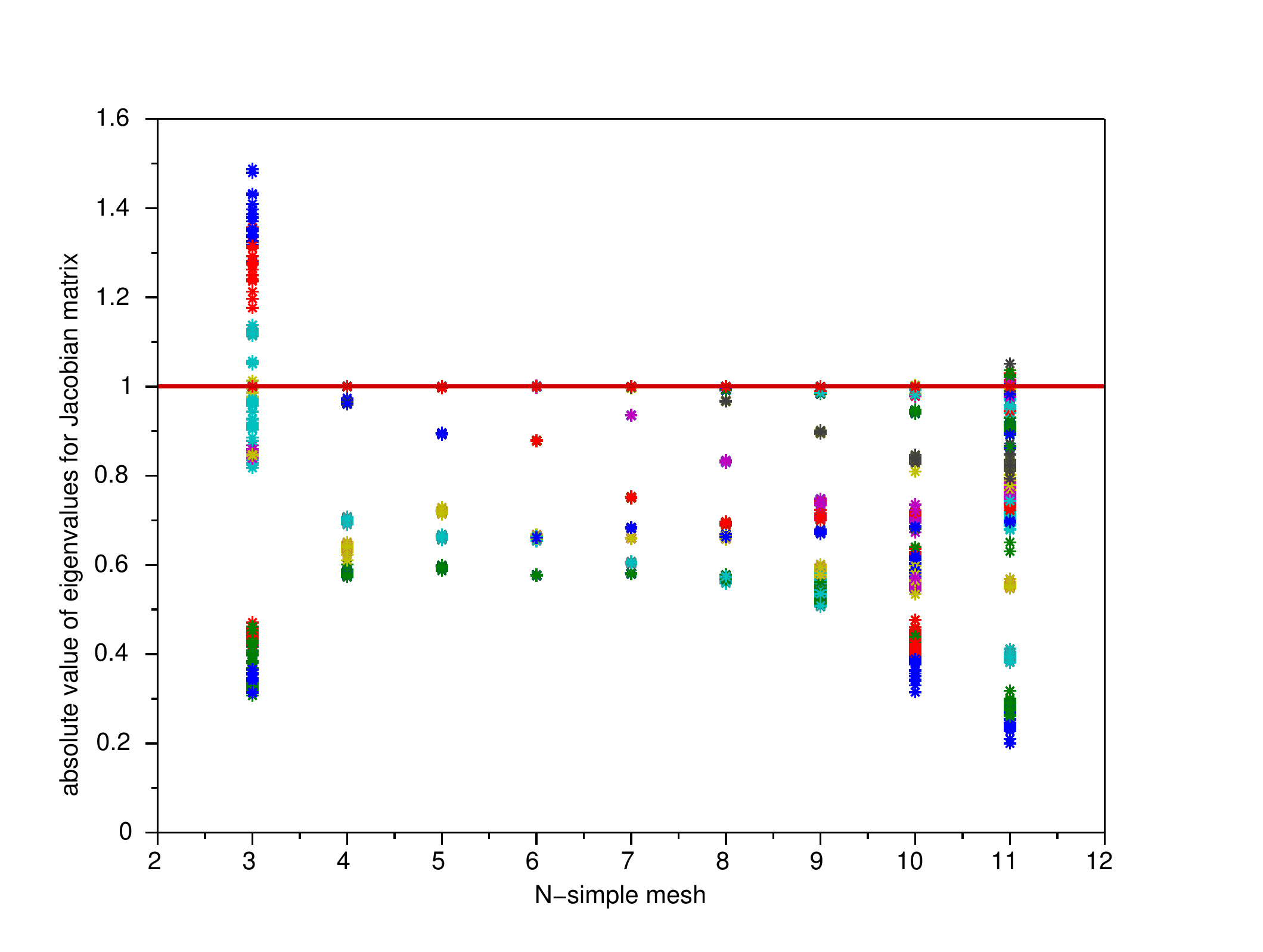}
\end{minipage}
\begin{minipage}{0.47\textwidth}
\includegraphics[width=\textwidth]{plot_simplemeshes-eps-converted-to.pdf}

\end{minipage}

\caption{Plot of the spectrum of $DT_N(x)$ where $T_N$ is the mesh transformation of a $N$-simple meshes and $x$ runs through 50 randomly generated $N$-simple meshes. On the right, the spectrum for the regular $N$-simple mesh is depicted. Note that the equilateral $3$-simple mesh is not an attracting point, but a \emph{saddle point}.}

\label{fig:simplemesh}
\end{figure}

We easily conclude that $\Lambda_{N,eq}$ is for $4 < N \leq 11$ a local attractor. Further, Figure~\ref{fig:simplemesh} seems to suggest that for $N \in [5,8]$, the fixed point set $\Lambda_{eq,N}$ is attracting in a quite large region.   

\subsubsection{Case 4: mesh of equilateral triangles}
Let $C$ be a connectivity such that every inner vertex has exactly six neighboring vertices, and consider the previously defined set $X_C$ of meshes with this connectivity. Denote by $N_i$ the indices of inner vertices and by $N_b$ the indices of boundary vertices. Let $x_{eq}=(x_0,\dots,x_{N-1})\in (\mathbb{R}^2)^N$ be the mesh of equilateral triangles. Exactly as above, we consider the whole group orbit $\Lambda_{eq} \subset X_C$ of the similarity group. Then we compute the Jacobian matrix $D\Theta(x_{eq})$ of the mesh transformation (\ref{e.mesh_transformation}) at $x_{eq} \in \Lambda_{eq}$: 
\begin{align}\label{e.meshjacobian}
D\Theta(x_{eq}) &= \left(\frac{\partial \Theta_k}{\partial x_l}\right)_{k,l=0,\dots,N-1}, \quad \frac{\partial \Theta_k}{\partial x_l} \in \mathbb{R}^{2 \times 2}\nonumber\\
\frac{\partial \Theta_k}{\partial x_l} &= A\quad \mbox{if} \; k=l\nonumber\\
\frac{\partial \Theta_k}{\partial x_l} &= \frac{1}{6}(B + C)^T \quad \mbox{if}\; l \in \Sigma_k,\; k\in N_i,\; \nonumber\\
\frac{\partial \Theta_k}{\partial x_l} &= \frac{1}{2}C \quad \mbox{if}\; l \in \Sigma_k,\; l,k\in N_b,\nonumber\\
\frac{\partial \Theta_k}{\partial x_l} &= \frac{1}{2}B \quad \mbox{if}\; l \in \Sigma_k,\; l\in N_i,\;k\in N_b,\;\nonumber\\
\frac{\partial \Theta_k}{\partial x_l} &=0 \quad \mbox{if}\; l\notin \Sigma_k.
\end{align} 
After various computations on different equilateral meshes we conjecture the following:
\begin{conj}
For any equilateral mesh $x$ the Jacobian matrix of $\Theta$ at $x$ has eigenvalues of absolute value $< 1$ except from exactly four. In particular, the group orbit $\Sim(\mathbb{R}^2).x$ of the mesh $x$ is an attractor.
\end{conj}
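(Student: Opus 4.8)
The plan is to turn the conjecture into a statement about the eigenvalues of a single explicit complex $N\times N$ matrix, and then to settle the ``bulk'' of the spectrum by a Fourier computation on the triangular lattice and the boundary separately. The first step is the observation, already implicit in Cases~1--3, that the three blocks $A,B,C$ of \eqref{e.jacobian} --- and hence every $2\times2$ block occurring in $D\Theta(x_{eq})$ in \eqref{e.meshjacobian} --- lie in the two-dimensional algebra of matrices commuting with the rotation by $\pi/2$, which is isomorphic to $\mathbb{C}$. Through $\mathbb{R}^2\cong\mathbb{C}$ the matrix $D\Theta(x_{eq})$ is therefore an ordinary complex $N\times N$ matrix $M$, whose real $2N\times2N$ spectrum is $\spec(M)\cup\overline{\spec(M)}$; the ``four eigenvalues equal to $1$'' required by the conjecture is exactly the assertion that $1$ is an eigenvalue of $M$ of algebraic multiplicity $2$ with no further eigenvalue on the unit circle. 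Two eigenvectors for $1$ are already available, namely the constant vector $\mathbf{1}$ (infinitesimal translations) and the vector $(x_k)_k$ of vertex positions (infinitesimal rotation--dilation); they span the tangent space to $\Lambda_{eq}=\Sim(\mathbb{R}^2).x_{eq}$, $D\Theta$ is the identity there by equivariance, so there is no Jordan obstruction. Thus everything reduces to proving $|\lambda|<1$ for every $\lambda\in\spec(M)\setminus\{1\}$, after which Lemma~\ref{l.convergence} yields the attractor.

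For the bulk I would write $M=aI+K$, where $a=\tfrac34+\tfrac{i}{4\sqrt3}$ is the interior diagonal block $A$ read as a complex number and $K$ is a weighted adjacency operator of the mesh graph; the $6$-valence together with the symmetry of the equilateral star forces the weight on each edge incident to an interior vertex to be $\tfrac{1-a}{6}$ (this is precisely what makes $M\mathbf{1}=\mathbf{1}$), while the boundary rows carry the different weights coming from $\tfrac12 B$, $\tfrac12 C$ and their corner variants. On the infinite $6$-valent triangular lattice $K$ is a convolution operator with real symbol $(1-a)\,p(\xi)$, where $p(\xi)=\tfrac16\sum_{j=1}^6 e^{i\langle\xi,d_j\rangle}\in[-\tfrac12,1]$ and $d_1,\dots,d_6$ are the six lattice directions; hence the bulk symbol of $M$ runs along the segment $\{\,a+(1-a)t : t\in[-\tfrac12,1]\,\}$, and one computes $\lvert a+(1-a)t\rvert^2=\tfrac1{12}(t^2+4t+7)$, which is strictly increasing on $[-\tfrac12,1]$ and equals $1$ only at $t=1$, i.e.\ at the constant mode $\xi=0$. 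So the bulk part of the spectrum sits inside the closed unit disk and meets the circle only at $1$.

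The main obstacle is the boundary: for a finite mesh, $\spec(M)$ is not exhausted by the bulk symbol, and the boundary rows, using a different stencil, could in principle support boundary-localized eigenvectors with $\lvert\lambda\rvert\ge1$ --- note that even a Gershgorin bound on $\tfrac12(M+M^{\ast})$ already fails for vertices adjacent to the boundary, so no crude numerical-range argument closes the estimate. I would attack this by exploiting that along each straight boundary segment every flat boundary vertex has the same $4$-neighbour stencil, so the boundary operator is translation invariant there; a partial Fourier transform along the segment then reduces the question to a one-parameter family, indexed by a phase $e^{i\varphi}$ on the circle, of half-plane transfer operators, and the claim becomes that none of these produces an eigenvalue of modulus $\ge1$ beyond the bulk contribution, the finitely many corners contributing a bounded number of genuinely local spectral problems. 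An alternative is to construct a Lyapunov norm $\lVert v\rVert_w^2=\sum_k w_k\lvert v_k\rvert^2$ with weights $w_k$ enlarged near the boundary and to show $\lVert Mv\rVert_w<\lVert v\rVert_w$ on the $M$-invariant complement of $\mathrm{span}\{\mathbf{1},(x_k)_k\}$, using the definite interior dissipation provided by the strict inequality in the bulk estimate away from $\xi=0$ to absorb the boundary terms. I expect this to be the genuinely hard step: neither the transfer-operator spectra nor admissible Lyapunov weights seem to be available in closed form, so a fully uniform proof is likely to require either an algebraic identity special to the matrices $A,B,C$ or a rigorous interval-arithmetic certification carried out mesh by mesh --- which would explain why the result is offered here only as a conjecture.
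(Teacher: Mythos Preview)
The paper does not prove this statement at all: it is explicitly labelled a conjecture, supported only by numerical computation of $D\Theta$ at various equilateral meshes. There is therefore no ``paper's own proof'' to compare against; your proposal already goes substantially further than anything in the text.

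Your reduction of the $2N\times 2N$ real Jacobian to an $N\times N$ complex matrix is correct and is the natural first move --- the blocks $A,B,C$ and their transposes all lie in the copy of $\mathbb{C}$ inside $\mathbb{R}^{2\times 2}$, so the whole matrix is $\mathbb{C}$-linear. Your identification of the two complex (four real) eigenvalues~$1$ with the tangent space of the similarity orbit is correct and matches the paper's reasoning in the earlier cases. Your bulk computation is also right: with $a=\tfrac34+\tfrac{i}{4\sqrt3}$ the interior row has off-diagonal weight $\tfrac{1-a}{6}$ per neighbour, the symbol is $a+(1-a)p(\xi)$ with $p(\xi)\in[-\tfrac12,1]$ on the triangular lattice, and $\lvert a+(1-a)t\rvert^2=\tfrac{1}{12}(t^2+4t+7)$ is strictly increasing and hits~$1$ only at $t=1$. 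That part is clean.

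The gap you name is real and is exactly where the argument stops being a proof. The boundary rows use genuinely different stencils (the $\tfrac12 B$, $\tfrac12 C$ blocks and their corner variants in \eqref{e.meshjacobian}), the matrix is not normal, and --- as you note --- neither a Gershgorin/numerical-range bound nor the bulk symbol controls possible boundary-localised modes. Your two suggested attacks (partial Fourier transform along straight boundary segments leading to a one-parameter family of half-space transfer problems, or a weighted Lyapunov norm absorbing boundary terms into interior dissipation) are both reasonable strategies, but neither is carried out, and you yourself flag that closed-form control seems out of reach. So the proposal is an honest and technically sound \emph{plan} that isolates the difficulty precisely, not a proof; this is consistent with the paper's decision to leave the statement as a conjecture.
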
  

\subsubsection{Further generalization}
One could again study the jacobian matrix of $\Theta$ at any fixed point $x$. But things get much more complicated, because the matrices could not be expressed in a simple way. 
We conjecture the following, where $\tilde{X}_C$ is the quotient space $X_C/\Sim(\mathbb{R}^2)$ of the group action of $\Sim(\mathbb{R}^2)$: 
\begin{conj}\label{t.global} For any $4 \leq N < \infty$ and any connectivity $C$ with cardinality $N$ the following is true: there exists a metric $\left\|\;\right\|_X$ on the quotient space $\tilde{X}_C$ such that the map $\tilde{\Theta}$ induced on $\tilde{X}_C$ is strictly contracting on its domain with respect to this metric, that is 
$$\left\|T(\mathcal{M}_C)-T(\mathcal{M}_C')\right\|_X \leq\lambda \left\|\mathcal{M}_C - \mathcal{M}'_C\right\|_X \;\mbox{for any two meshes}\; \mathcal{M}_C,\mathcal{M}_C' \in \tilde{X}_C.$$  
\end{conj}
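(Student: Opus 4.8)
The plan is to read Conjecture~\ref{t.global} as a statement about the dynamics of the quotient map and to produce the metric by an adapted-norm (Lyapunov) construction, carrying out globally the same mechanism used in Cases~1--4. First I would make the quotient precise: since $\Sim(\mathbb{R}^2)$ acts freely, and properly on the locus of non-degenerate meshes, the space $\tilde X_C = X_C/\Sim(\mathbb{R}^2)$ is a smooth manifold of dimension $2N-4$, and by the equivariance of $\Theta$ the induced map $\tilde\Theta$ (written $T$ in the statement) is a well-defined $C^r$ map on the open set $\tilde{\mathcal D}$ which is the image of the domain of $\Theta$. The distinguished regular mesh (equilateral where the connectivity allows it, otherwise the fixed point used in Case~3) projects to a fixed point $\tilde x_\ast \in \tilde{\mathcal D}$, and the content of the earlier cases and of the preceding conjectures is exactly that $D\tilde\Theta_{\tilde x_\ast}$ has spectral radius $<1$. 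The goal is to promote this to a genuine metric contraction on a domain large enough to deserve the name ``its domain'' in the statement.

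The reduction I would use is: a global contraction metric follows from a uniform contraction of the \emph{derivative cocycle}. Concretely, suppose one finds a forward-invariant open set $\mathcal{D}_0 \subset \tilde{\mathcal D}$ with $\tilde x_\ast \in \mathcal{D}_0$ and constants $C\ge 1$, $\sigma\in(0,1)$ such that $\|D\tilde\Theta^n_x v\|\le C\sigma^n\|v\|$ for all $x\in\mathcal{D}_0$, all $v\in T_x\tilde X_C$ and all $n\ge 0$, the norm being that of a fixed background Riemannian metric. Fix $\sigma<\rho<1$ and set $\|v\|_X^2:=\sum_{n\ge 0}\rho^{-2n}\|D\tilde\Theta^n_x v\|^2$; the series converges uniformly, $\|\cdot\|_X$ is equivalent to the background norm, and a one-line telescoping gives $\|D\tilde\Theta_x v\|_X\le\rho\|v\|_X$. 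Integrating this infinitesimal contraction along paths yields $d_X(\tilde\Theta(\mathcal{M}_C),\tilde\Theta(\mathcal{M}_C'))\le\rho\,d_X(\mathcal{M}_C,\mathcal{M}_C')$ for the induced length metric, once $\mathcal{D}_0$ is chosen so that any two of its points are joined by a path in $\mathcal{D}_0$ mapped by $\tilde\Theta$ into $\mathcal{D}_0$ — for instance a $d_X$-geodesically convex neighbourhood of $\tilde x_\ast$, obtained by the usual bootstrap. Completeness of $(\overline{\mathcal{D}_0},d_X)$ plus Banach then re-prove uniqueness of $\tilde x_\ast$ and exponential convergence of every admissible mesh in $\mathcal{D}_0$.

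The hard part — and the reason this is only a conjecture — is establishing the uniform cocycle bound on a sufficiently large $\mathcal{D}_0$. Pointwise information (spectral radius of $D\tilde\Theta_x$ below $1$ at each $x$) does \emph{not} give uniform contraction of the product $D\tilde\Theta_{x_{n-1}}\cdots D\tilde\Theta_{x_0}$, since these matrices do not commute; moreover, as already noted in the remarks after Cases~2 and~3, $D\Theta(x)$ genuinely has eigenvalues of modulus $>1$ for distorted meshes, so the conjecture cannot hold on all of $\tilde X_C$ and the correct $\mathcal{D}_0$ must be pinned down. I would attack this in two stages. \textbf{(i) A basin step:} using Theorem~\ref{t.convergence} element-wise together with a quantitative form of Lemma~\ref{l.welldefinedness}, show that after a bounded number of iterations every mesh of distortion $\ge\delta$ enters a prescribed neighbourhood $\mathcal U$ of $\Lambda_{eq}$ — this is precisely the phenomenon observed numerically in the Case~2 remark. \textbf{(ii) A uniform-contraction step on $\mathcal U$:} since the normalized meshes of distortion $\ge\delta$ form a compact set and $D\tilde\Theta$ depends continuously on the mesh, the spectral radius is bounded by some $\sigma_0<1$ on a compact neighbourhood of $\tilde x_\ast$, and one upgrades this to a genuine cocycle bound by the adapted-norm argument applied fibrewise together with compactness, in the spirit of the normally hyperbolic machinery already invoked in Lemma~\ref{l.convergence}. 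Combining (i) and (ii) produces the forward-invariant $\mathcal{D}_0$ and the bound needed above; I expect step~(ii)'s passage from pointwise to uniform, and the control of $\mathcal{D}_0$ near its boundary where triangles degenerate, to be the genuine obstacles.

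A more hands-on alternative, possibly easier for small $N$, is to bypass the cocycle and exhibit an explicit strict Lyapunov functional. The natural candidate is the scale-invariant total spread $V(\mathcal{M}_C):=\sum_i\big(\max_j\|x_{i_j}-c_i\|-\min_j\|x_{i_j}-c_i\|\big)$ (or the sum over triangles of the variance of the three centroid--vertex distances), which descends to $\tilde X_C$, vanishes exactly on $\Lambda_{eq}$, and which Lemmas~\ref{l.max}--\ref{l.min} show is strictly decreased on each triangle in isolation; the remaining task is to control the coupling introduced by the barycentric averaging in~\eqref{e.mesh_transformation} and prove $V(\Theta(\mathcal{M}_C))\le\kappa\,V(\mathcal{M}_C)$ with $\kappa<1$ on the admissible domain. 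A strict Lyapunov function together with the local exponential attraction already established then furnishes a metric in which $\tilde\Theta$ contracts, via a standard converse-Lyapunov construction. Here too the coupling estimate is the crux, but it is of a more elementary, inequality-chasing character than the spectral cocycle bound.
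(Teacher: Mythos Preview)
The statement you are addressing is labelled in the paper as a \emph{conjecture}, not a theorem, and the paper gives no proof of it. After stating it, the authors merely note that it ``would imply in particular that any fixed point $\tilde{x}\in\tilde{X}_C$ is an attractor'' and then point to numerical evidence in Figures~\ref{fig:frobenius3} and~\ref{fig:plot_meshtrans}. So there is no paper proof to compare your proposal against.

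Given that, your write-up is not a proof either, and you are candid about this: you correctly identify that the adapted-norm construction reduces the question to a uniform bound on the derivative cocycle over a forward-invariant domain, and you correctly flag that this cocycle bound is exactly the missing ingredient. Your two-stage plan (basin step plus uniform contraction on a compact neighbourhood of the fixed point) is a reasonable strategy, and your Lyapunov-functional alternative is in the spirit of Lemmas~\ref{l.max}--\ref{l.min}. But neither route is carried through: step~(i) would need a quantitative mesh-level version of the element-wise monotonicity that survives the barycentric averaging in~\eqref{e.mesh_transformation}, and step~(ii) presupposes the spectral conjecture stated just before Conjecture~\ref{t.global}, which is itself open. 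The paper's own numerical remarks (eigenvalues of modulus $>1$ for distorted meshes, the $N=3$ simple mesh being a saddle) confirm that the domain restriction is essential and not merely a technicality. In short, your proposal is a sensible research outline for attacking the conjecture, consistent with the paper's heuristics, but it does not close the gap that keeps this a conjecture rather than a theorem.
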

This would imply in particular that any fixed point $\tilde{x} \in \tilde{X}_C$ is an attractor. 
\begin{rem} \begin{enumerate}

\item In Figure~\ref{fig:frobenius3} we show the absolute value of the six eigenvalues for 700 randomly generated triangles. This figure stresses also the fact that for not too distorted triangles the triangle transformation is strictly contracting transverse to the normally hyperbolic invariant set characterized by the four eigenvalues equal to one.

\item In Figure~\ref{fig:plot_meshtrans} we computed the norm of the Jacobian of the mesh transformation of a mesh of $7$ triangles (shown in the left picture) in relation to the matrix norm of the Jacobian for the most regular mesh of $7$ triangles. One observes in the right picture how the matrix norm approaches the optimal matrix norm as the quality of the triangle mesh approaches its optimum.   
\end{enumerate}
\end{rem}
\begin{figure}[htpb]
\centering
\begin{minipage}{0.47\textwidth}
\includegraphics[width=\textwidth]{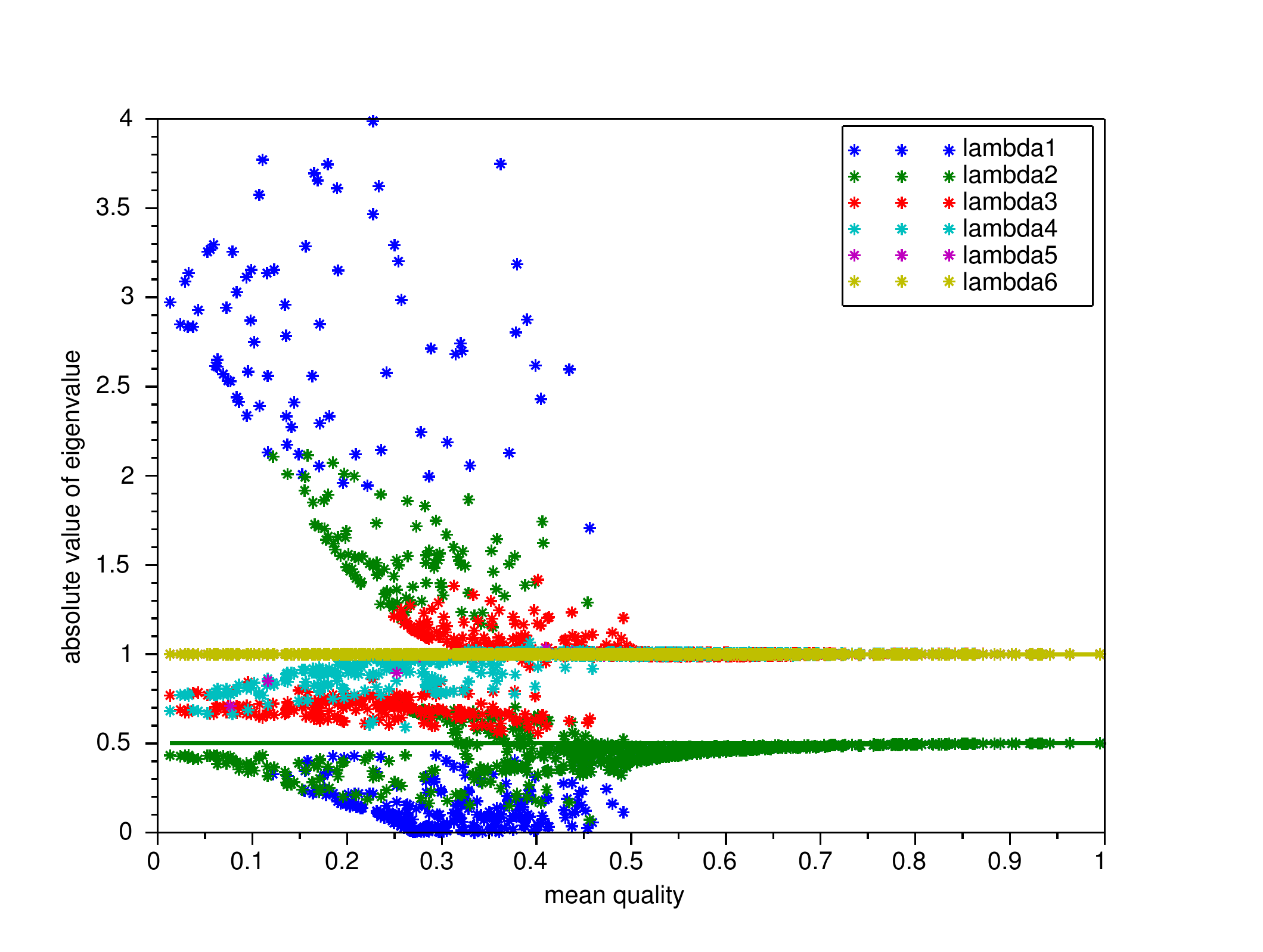}
\caption{Plot of the absolute value of eigenvalues in dependence of the triangle quality of 700 randomly generated triangles. }
\label{fig:frobenius3}
\end{minipage}
\centering
\begin{minipage}{0.47\textwidth}
\includegraphics[width=\textwidth]{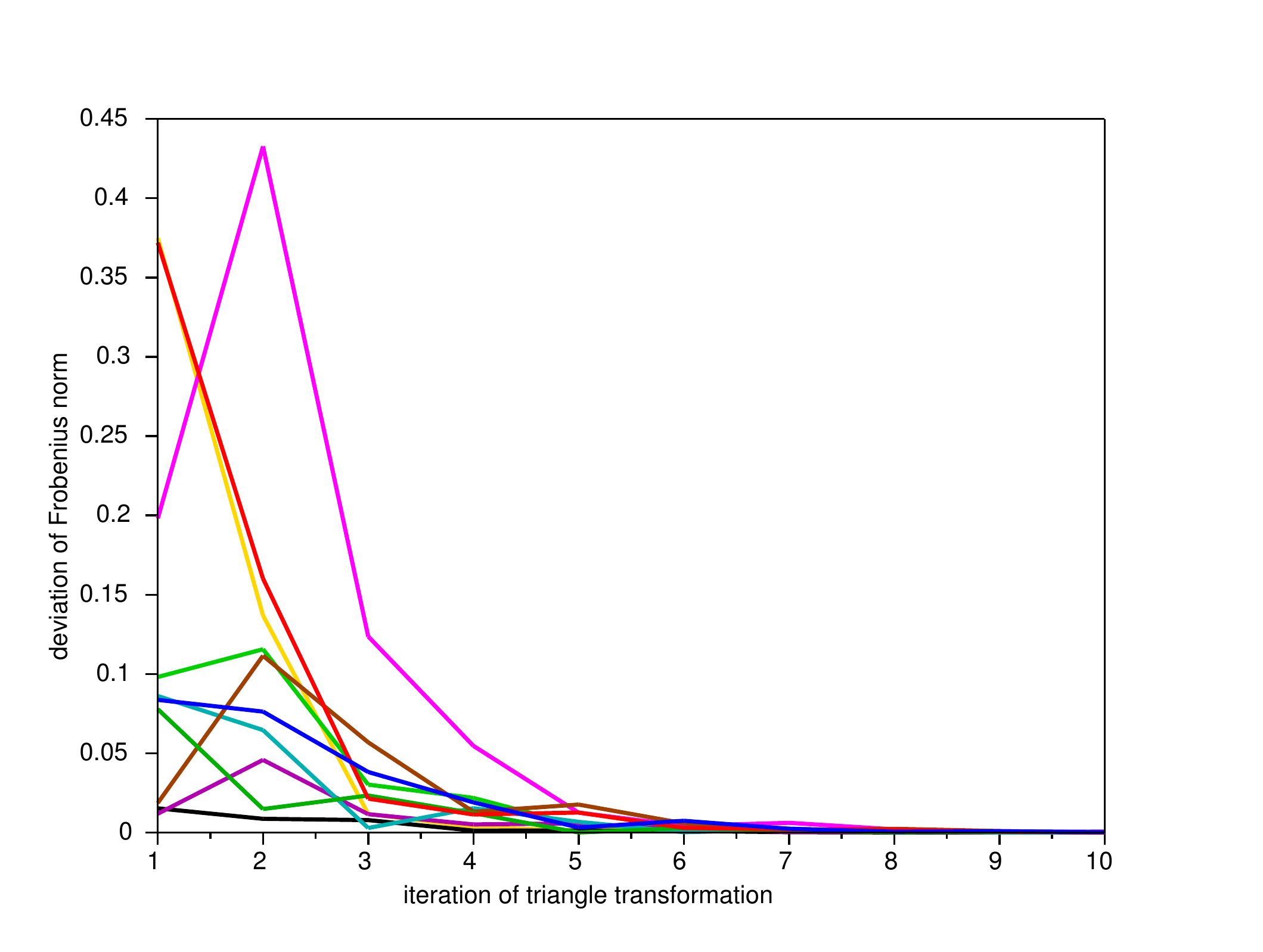}
\caption{Plot of the deviation $|\left\|dt^i\right\| - \left\|dt_e\right\||$ of the Frobenius norm of $t^i, i=1,\dots,10$ from the Frobenius norm $\left\|dt_e\right\|$ corresponding to an equilateral triangle.}
\label{fig:frobenius4}
\end{minipage}
\end{figure}

\paragraph{Outlook:}
The proof should be easily generalized for triangle meshes defined on Riemannian surfaces, that is, -- with the notations above -- the triangle mesh is defined by $M_C: \Sigma \rightarrow S, k \mapsto x_k \in S$ where $S$ is a Riemannian surface such that every triangle $M(\Delta)$ lies inside one chart neighborhood. \\
The techniques developed in this proof could also be adaptable to similar geometric mesh transformations. \\
In \cite{VB14}, we model the triangle transformation above by system of linear differential equations which could be seen as the description of coupled damped oszillations. This  model provides another explanation why the transformation converges to a equilateral triangle. One could think of the mesh transformation as the discretization of the solution of a system of coupled damped oszillations which are driven by each other antagonizing the damping. 

\begin{figure}[htpb]
\begin{minipage}{0.47\textwidth}
\includegraphics[width=\textwidth]{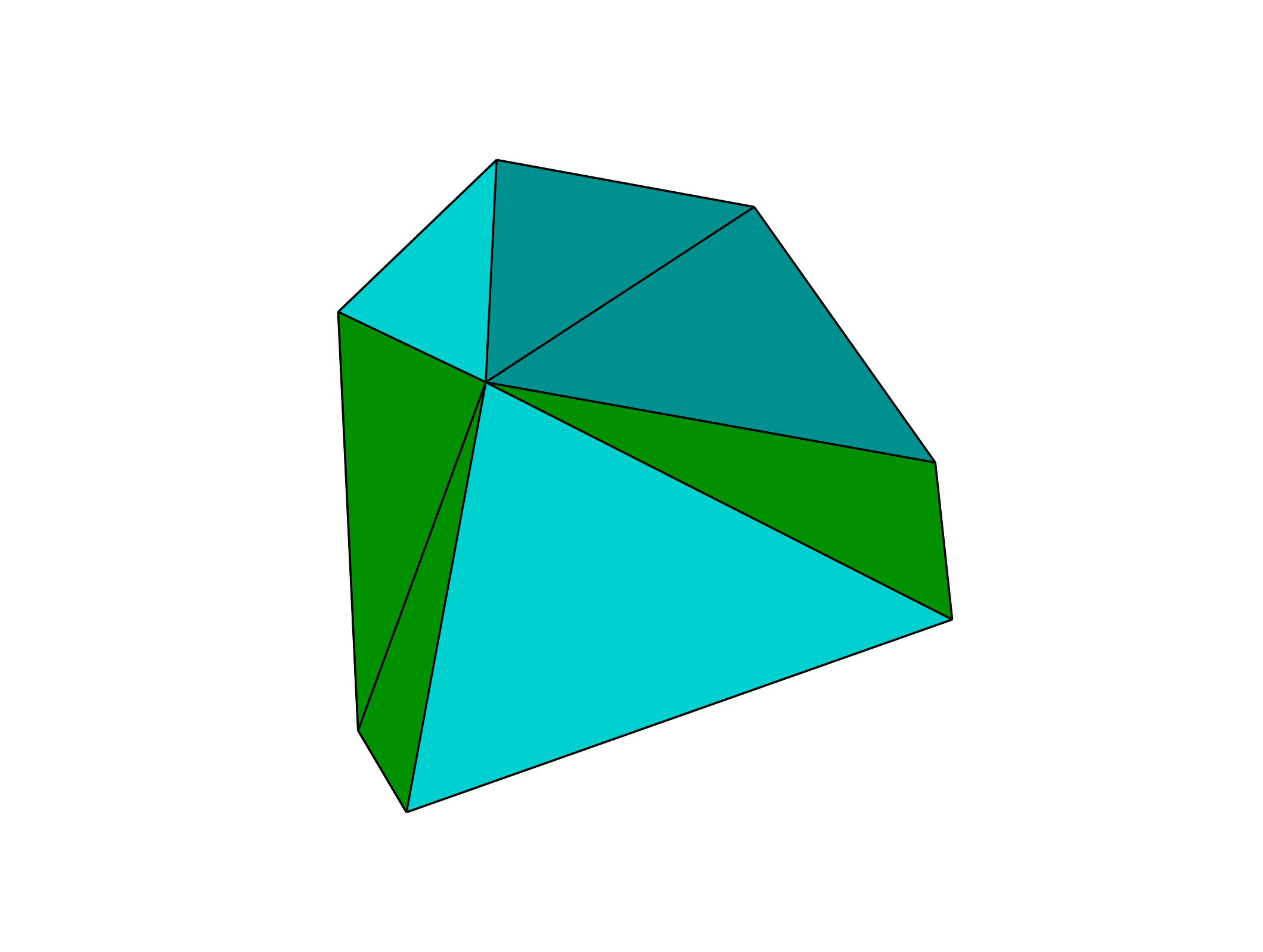}
\end{minipage}
\begin{minipage}{0.47\textwidth}
\includegraphics[width=\textwidth]{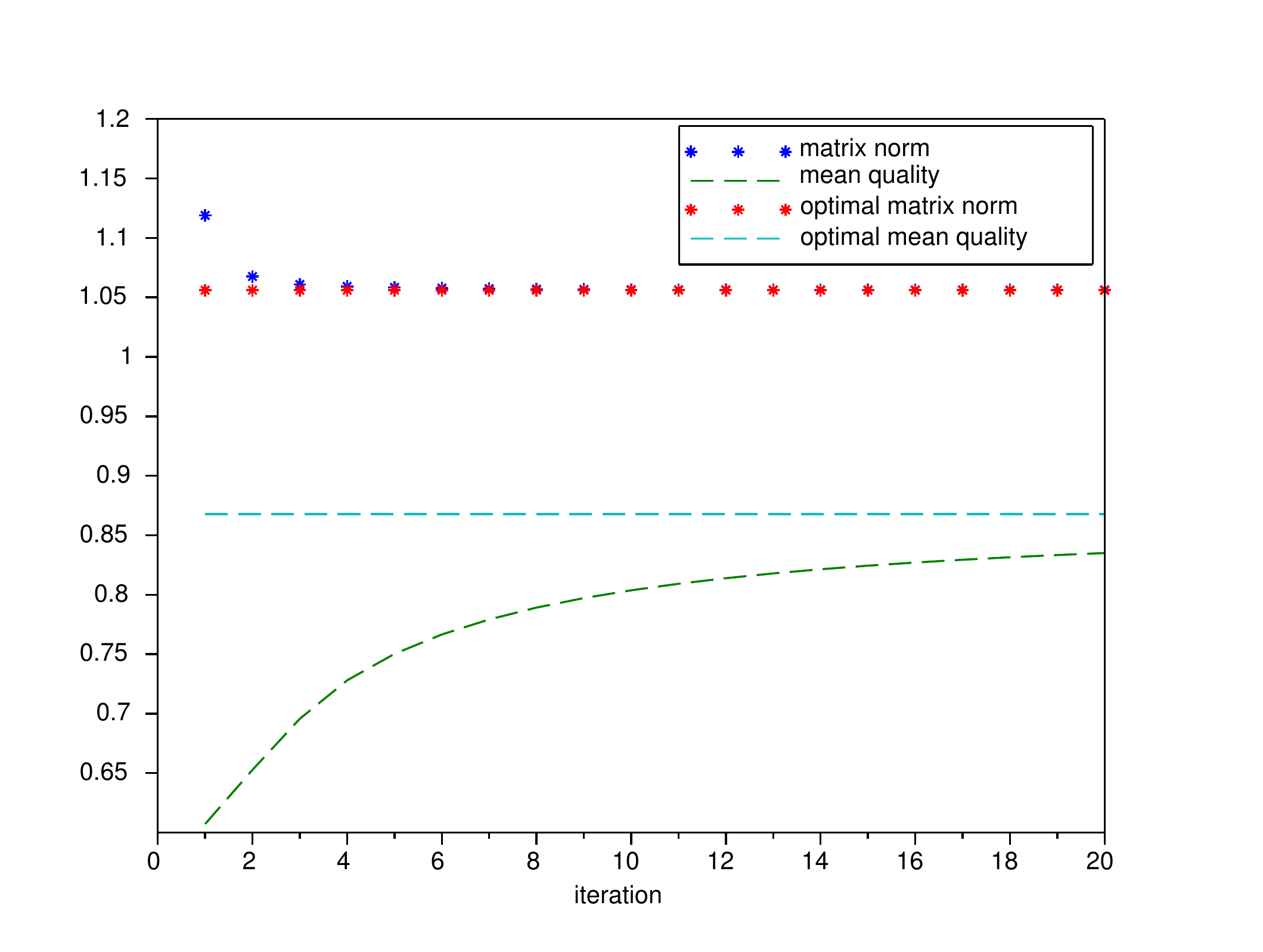}
\end{minipage}

\caption{Plot of $\left\|dT\right\|_2$ and mean mesh quality during iterations for a mesh of $7$ triangles together with the same values for an optimal mesh of $7$ triangles with inner angle $2\pi/7$. }
\label{fig:plot_meshtrans}
\end{figure}
\section{Short discussion of implementation and numerical results}\label{s.numerical}
We do not focus in this article on the application of our algorithm, so the following discussion is kept very brief and should be treated as a motivation to explore further the practical possibilities of the presented algorithm in the future. We have implemented the method as it is described above in Section~\ref{s.convergence} inside the open source software Scilab 5.4.1. The method could be equally well directly implemented in $C$. For an industrial usage this is strongly preferable to make it more efficient. \\
We tested the method for a randomly generated triangulation of the unit square. See below in Figure~\ref{fig:trimesh} how the mesh converges to a mesh of quite equilateral triangles in very few iterations. 
\begin{figure}[htbp]
\centering
\begin{minipage}{0.47\textwidth}
\includegraphics[width=\textwidth]{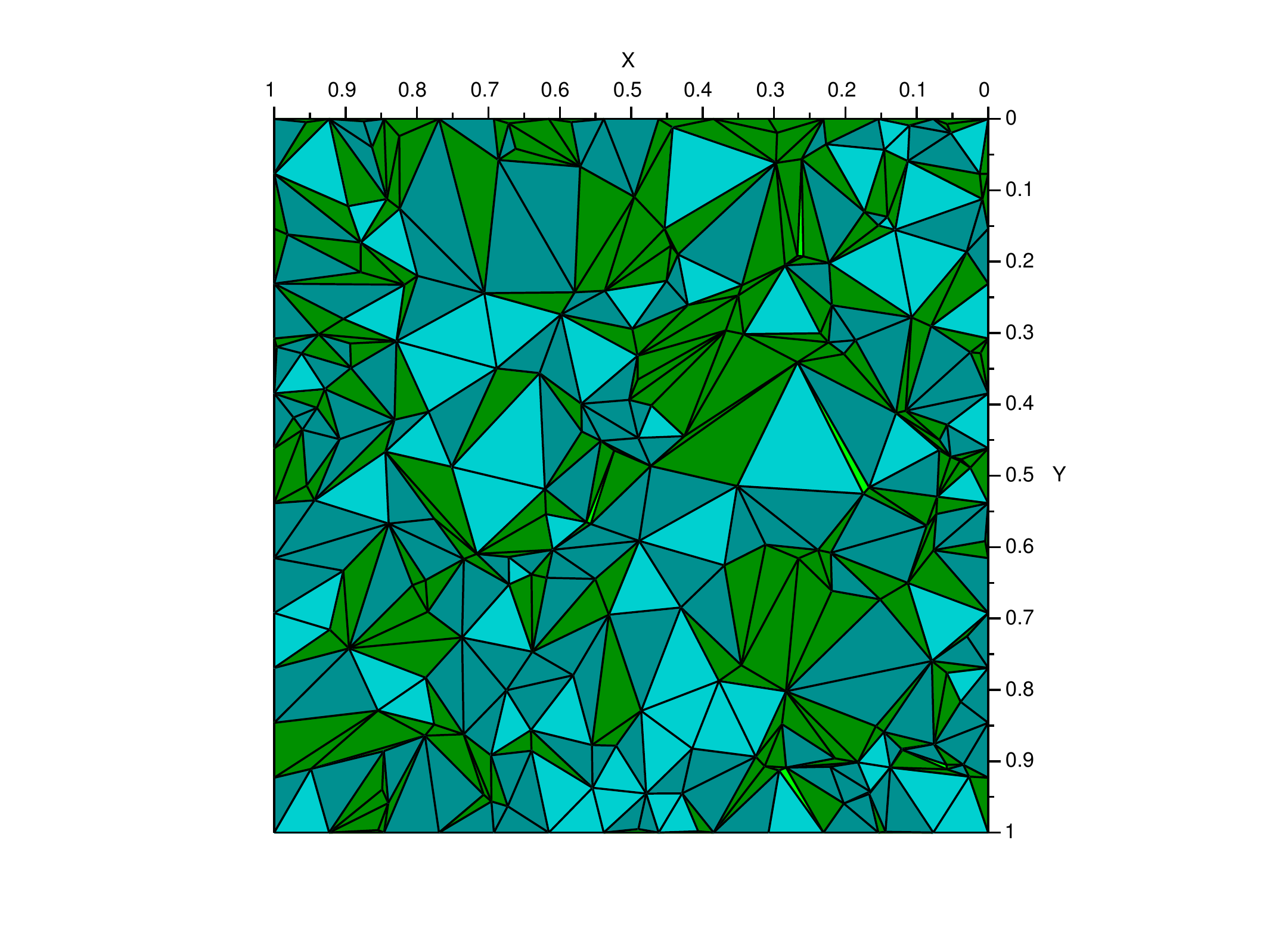}
\end{minipage} 
\begin{minipage}{0.47\textwidth}
\includegraphics[width=\textwidth]{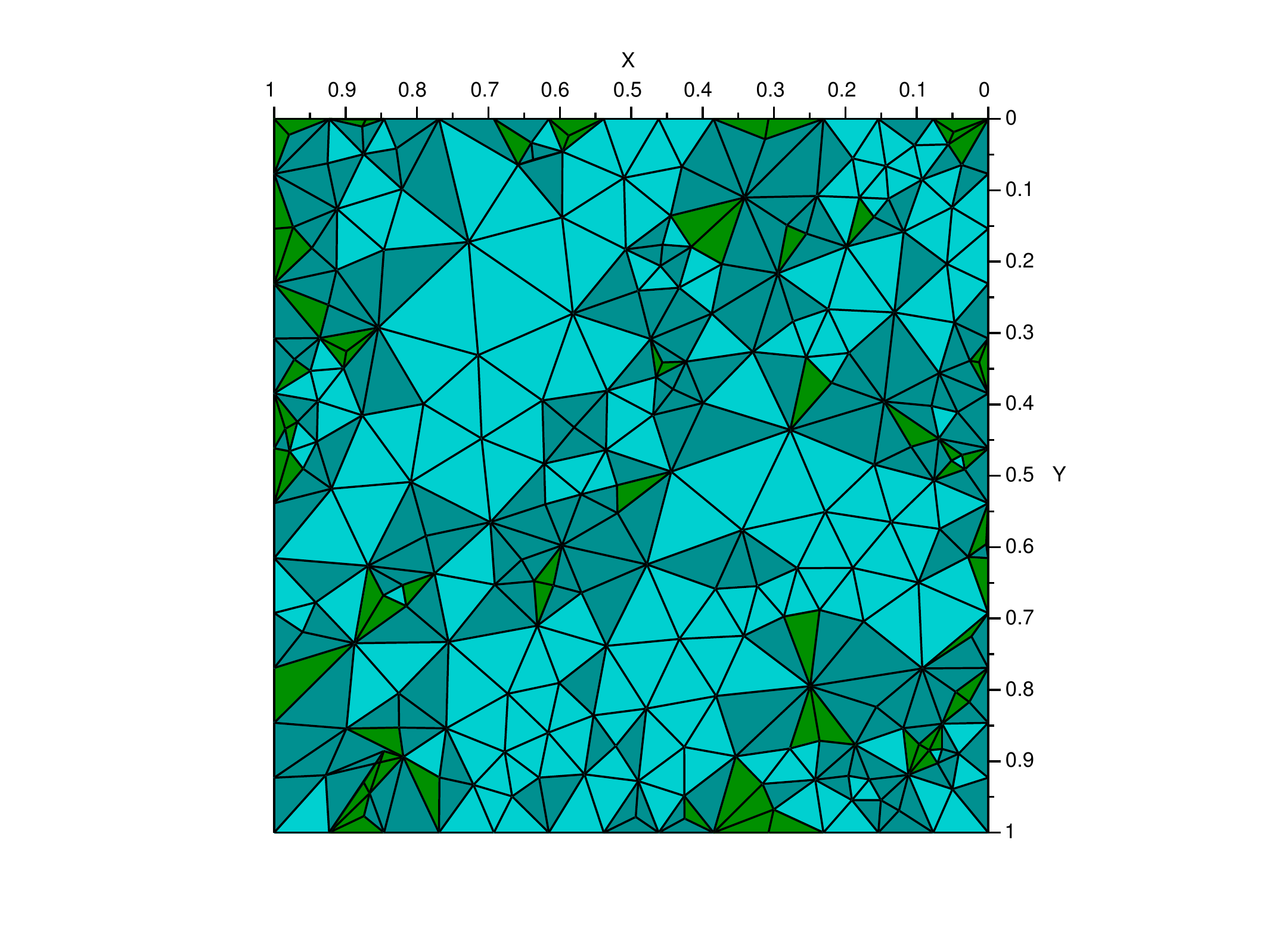}

\end{minipage}
\begin{minipage}{\textwidth}
\centering\vspace{0.2cm}
\includegraphics[width=8cm]{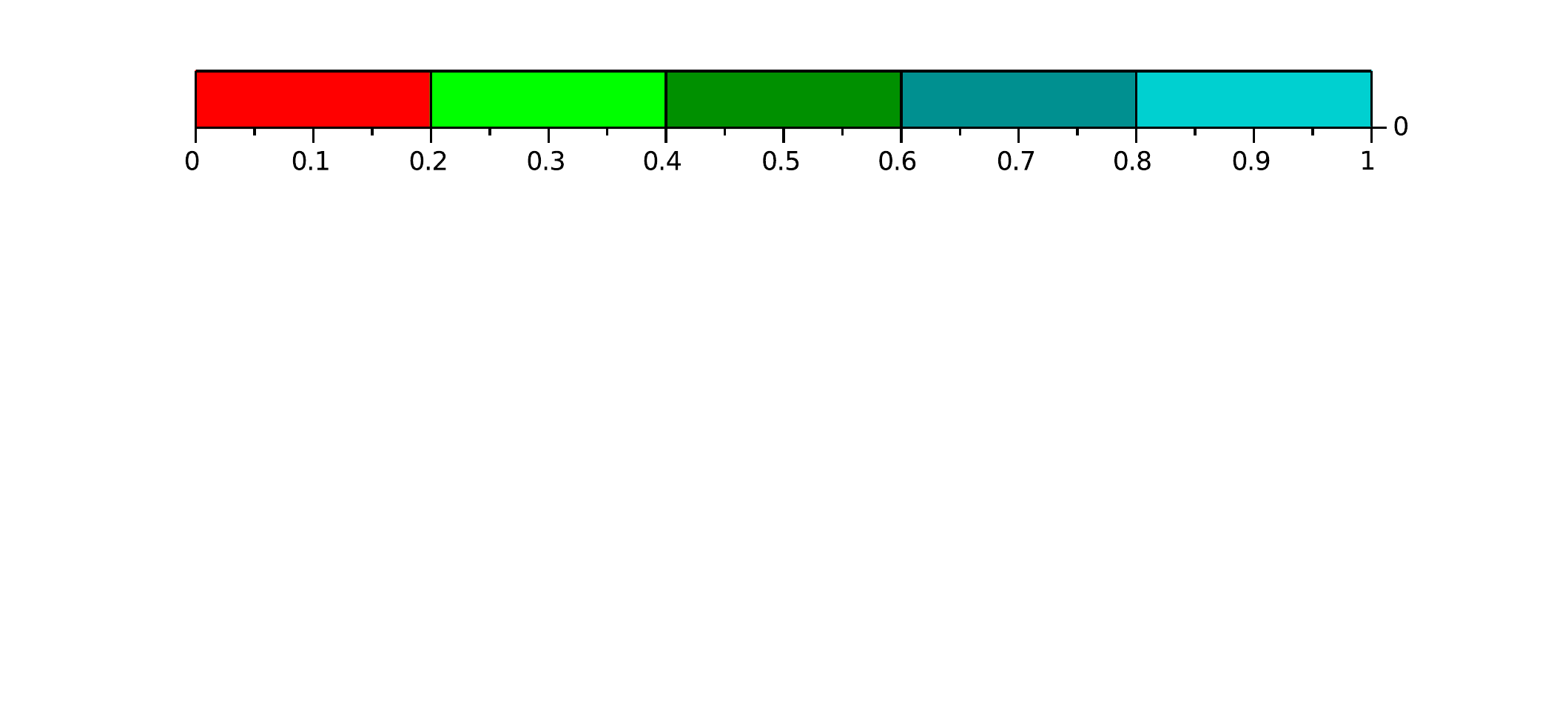}
\vspace{-2cm}

\end{minipage}
\caption{A randomly generated triangulation of the unit square at the beginning and after 10 iterations colored depending on their quality measure $q_{\Delta} \in (0,1]$.}
\label{fig:trimesh}
\end{figure} 
As quality measure $q_{\Delta}$ we used the ratio of minimal to maximal edge lengths of every triangle, $q_{\Delta}=\frac{\min_{i,j=1}^3 \left\|x_i-x_j\right\|}{\max_{i,j=1}^3 \left\|x_i -x_j\right\|}.$ The quality measure for a triangle mesh $V=(\Delta_0,\dots,\Delta_{|V|-1})$ is then the mean of the quality measure $q_{\Delta}$ for every triangle $\Delta \in V$: $q_V=\frac{1}{\left|V\right|}\sum_{\Delta \in V} q_{\Delta}.$\\
The mesh we smoothed in Figure~\ref{fig:trimesh} consists of 450 triangle elements. In Figure~\ref{fig:quality1} below we show how the number of elements with a certain quality measure develops over iterating the mesh and how the mean quality improves. \\
The smoothing algorithm works equally well for tetrahedra by applying the smoothing algorithm to the triangular faces. We display in Figure~\ref{fig:tetmesh} the cube $[0,1]^3$ cut at $x=0.5$ to show the improvement of the interior elements.
\begin{figure}[htbp]
\begin{subfigure}[l]{0.4\textwidth}
\includegraphics[width=\textwidth]{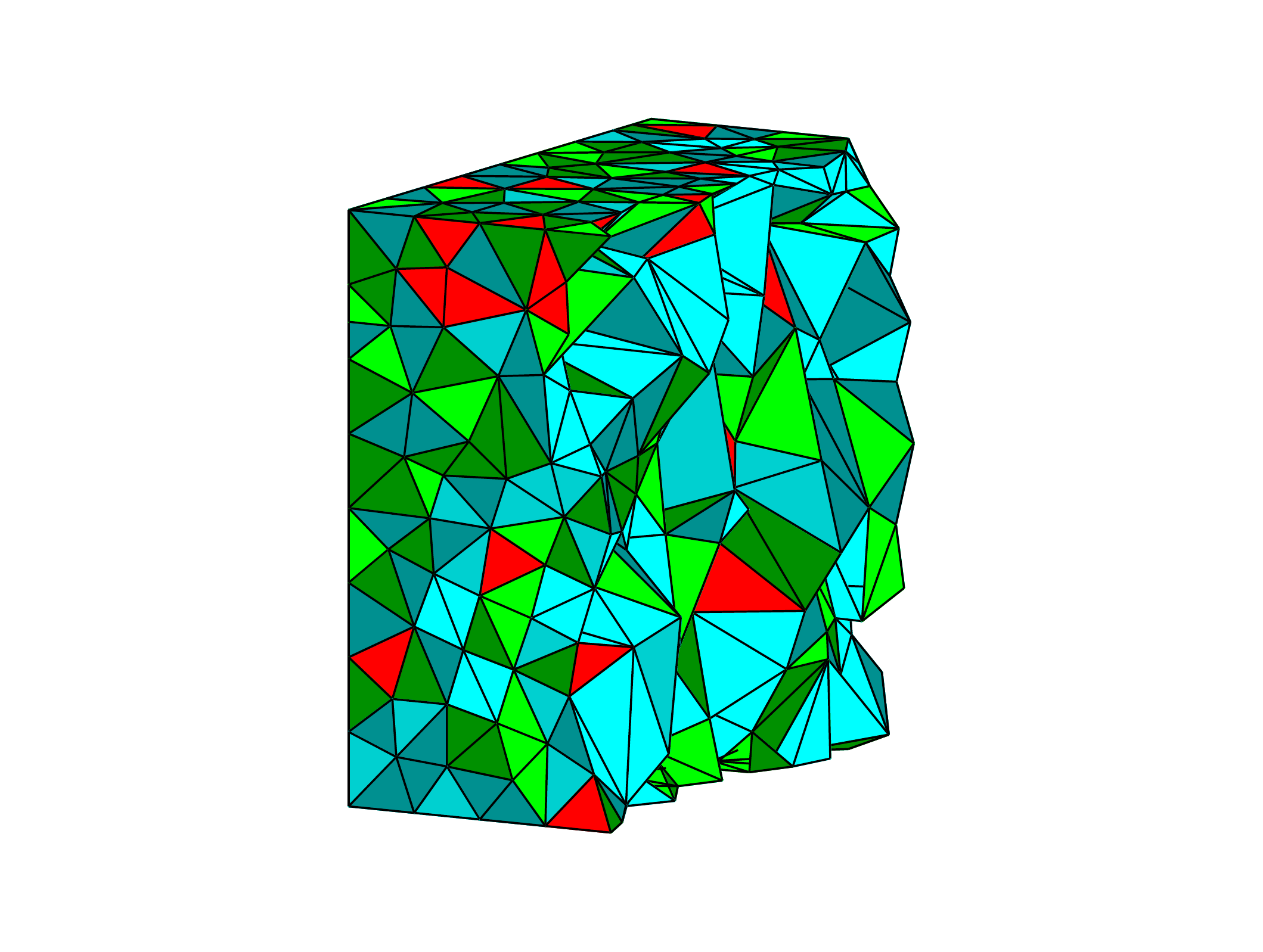}
\caption{Initial mesh: $q_V=0.4893$.}
\end{subfigure}
\begin{subfigure}[r]{0.4\textwidth}
\includegraphics[width=\textwidth]{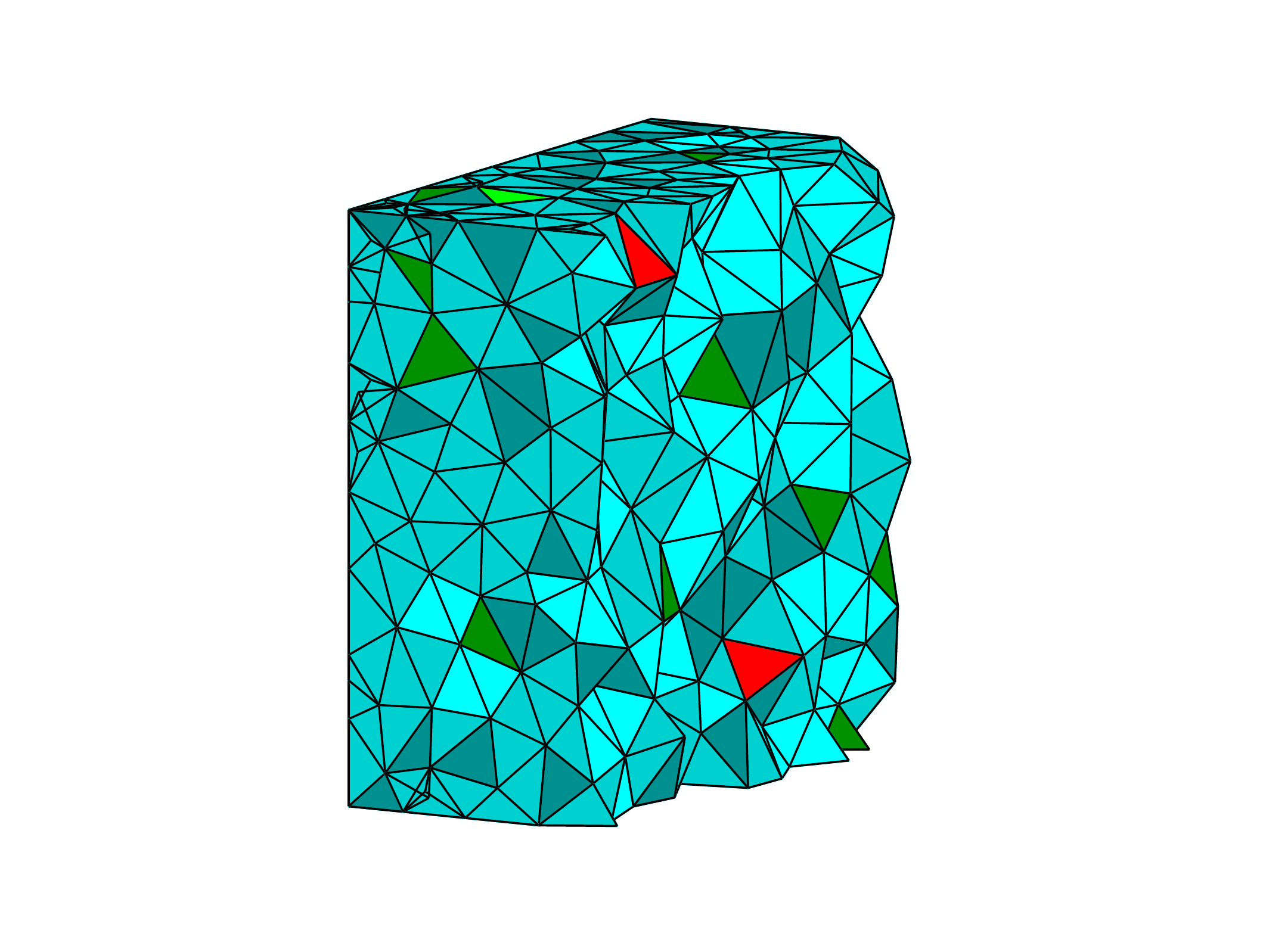}
\caption{10th iteration: $q_V=0.7652$.}
\end{subfigure}
\caption{Application to a tetrahedral mesh of 5318 elements of the unit cube (for $x < 0.5$).}
\label{fig:tetmesh}
\end{figure}

Let $T=(x_1,x_2,x_3,x_4)$ with $x_i \in \mathbb{R}^3$ be a tetrahedron. As quality measure $q_T$ for a tetrahedron $T$ we use the \emph{mean ratio quality measure} which is defined as following (see \cite{Knupp2001}):
\begin{align*}
q_T(T)&=\frac{3 \det(S)^{2/3}}{\trace(S^tS)}, \quad S=D(T)W, \;\mbox{where}\\
D(T)&=(x_2-x_1, x_3-x_1, x_4-x_1),\quad W=\begin{pmatrix}1& 1/2 &1/2\\0& \sqrt{3}/2&\sqrt{3}/6\\0&0&\sqrt{2/3}\end{pmatrix}. 
\end{align*}
As quality measure for a tetrahedral mesh $V=(T_0, \dots, T_{|V|-1})$ we used the mean quality measure of every element: 
$q_V=\frac{1}{|V|}\sum_{T \in V}q_T(T).$
In Figure~\ref{fig:quality}, one can observe how the quality measure of the mesh of Figure~\ref{fig:tetmesh} improves.
\begin{figure}[htpb]
\centering
\begin{subfigure}[l]{0.4\textwidth}
\includegraphics[width=0.9\textwidth]{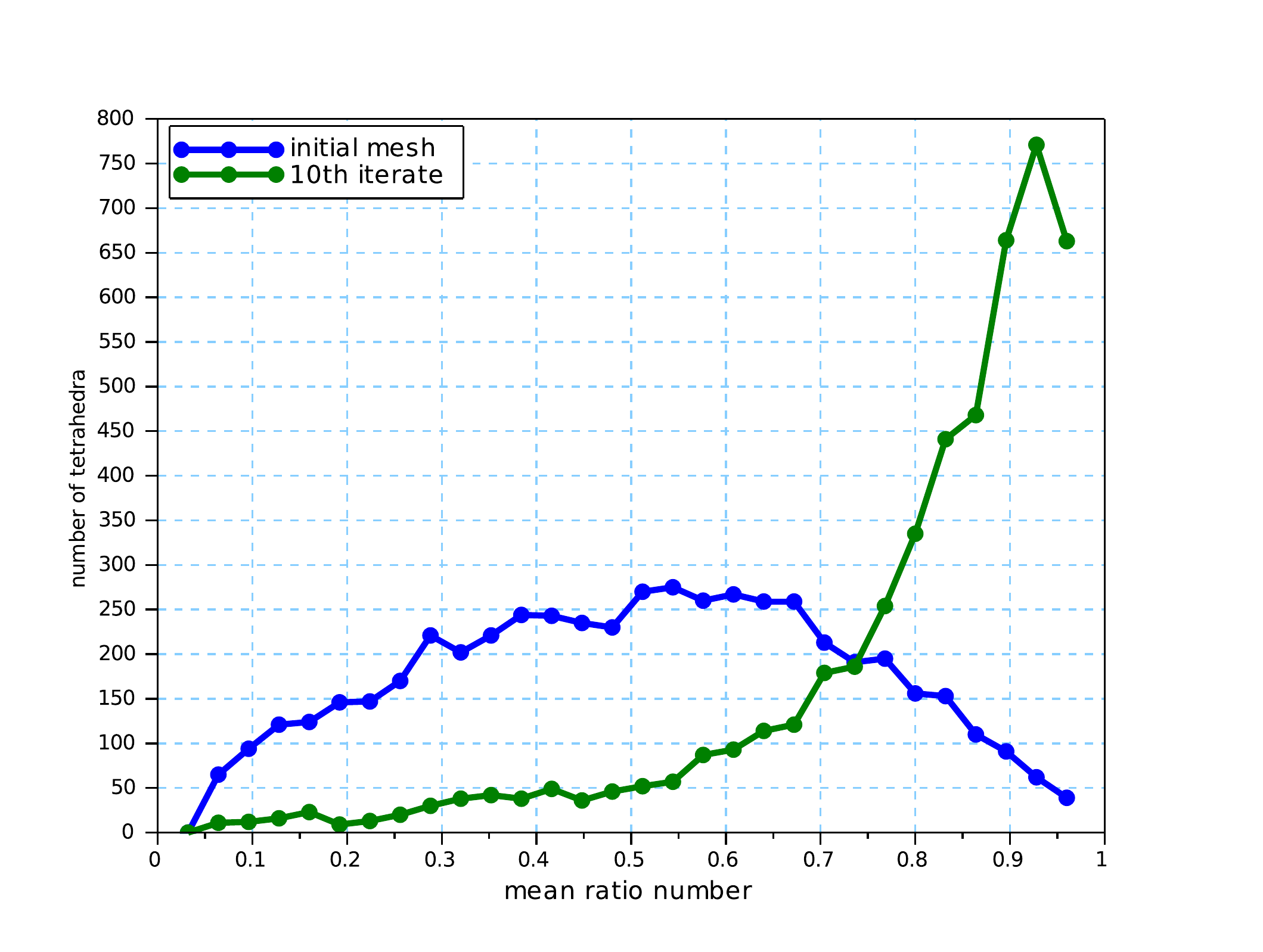}
\caption{Improvement of the cube mesh element quality for the mesh in Figure~\ref{fig:tetmesh}}
\label{fig:quality}
\end{subfigure}
\begin{subfigure}[r]{0.4\textwidth}
\includegraphics[width=0.9\textwidth]{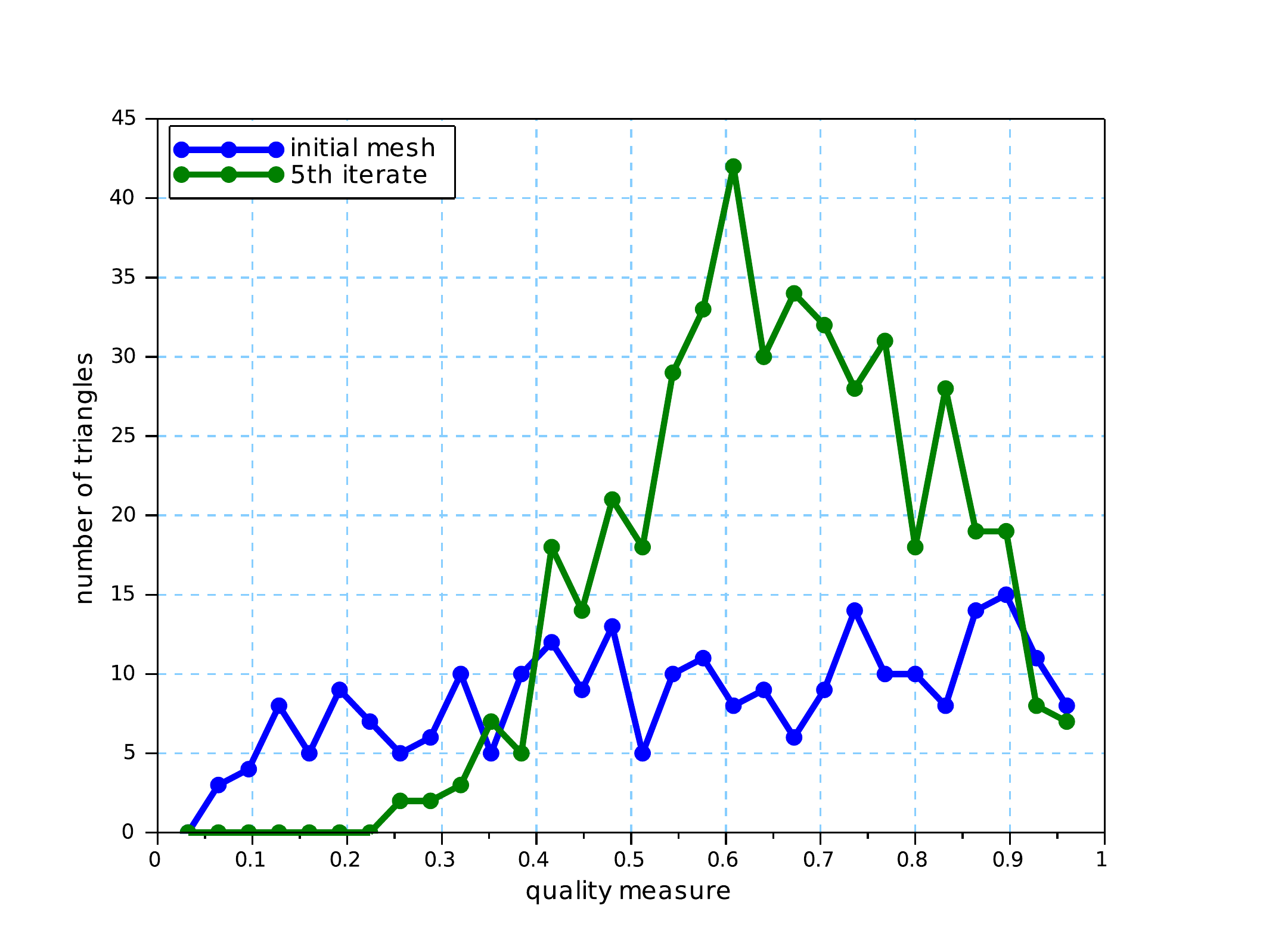}
\caption{Square quality element measure $q_{\Delta}$ before and after the smoothing of the mesh in Figure~\ref{fig:trimesh}.}
\label{fig:quality1}
\end{subfigure} 
\end{figure}

\section{Concluding remarks}
\subsection{Generalization to polygonal meshes}
Any polygon can be transformed in the exactly analogous way as the triangle above. Let $P=(x_0^{(0)},\dots,x_{k-1}^{(0)})$ be a convex $k$-gon with $x_i^{(0)} \in \mathbb{R}^2$ with its centroid in the origin. Then we can define a transformation in the following way recursively: 
$$x_i^{(n+1)}=\frac{k-1}{k}r_i^{(n)}x_i^{(n)} - \frac{1}{k}\sum_{j=0, j\neq i}^{k-1} r_j^{(n)}x_{j}^{(n)}, \quad r_i^{(n)}=\frac{\left\|x_{i-1}^{(n)}\right\|_2}{\left\|x_{i}^{(n)}\right\|_2}.$$
Remark that the centroid is kept in the origin througout the transformation. But the iterated polygon $P^{(n)}=(x_0^{(n)}, \dots,x_{k-1}^{(n)})$ does not necessarily converge for $n \rightarrow \infty$ to a polygon with equal distances $\left\|x_i\right\|_2=\left\|x_j\right\|_2$, $i,j=0,\dots,k-1$, i.e. its centroid coincides with its circumcenter. Consider for example a quadrilateral $Q^{(0)}$ with $\left\|x_0\right\|=\left\|x_2\right\|$ and $\left\|x_1\right\|=\left\|x_3\right\|$. Then $Q^{(0)}=Q^{(2)}$ is two-periodic but do not converge. So the transformation has not a globally attracting fixed point for all initial polygons. Also observe the following: while a triangle is regular if and only if the distances of its vertices to its centroid is equal, this is not the case for other polygons where it is just a necessary, but not a sufficient condition.

Accordingly, the transformation cannot be directly used for a smoothing algorithm for polygonal meshes without further adaption. \\
But nevertheless, the transformation can be used to smooth any polygonal mesh by subdividing every polygon into triangles and then applying the transformation to every triangle.   
\subsection{Outlook}
One easily detects the following shortcomings of the presented smoothing method which are open for future research: 
\begin{description}
\item[Global convergence] We only prove the global convergence for a compact subset of triangle meshes which exclude triangles close to degenerate ones. By changing the transformation a bit -- with regard to the estimates we derive during the proof -- such that the transformation is integrable, that is, the gradient of a function, and consequently the jacobian matrix normal, one could obtain better bounds and therefore extend the convergence result to a greater subset of meshes. 
\item[Performance] It was not the primary objective of this article to provide an efficient implementation, but to analyze the underlying mathematics. So we have to admit that each iteration step is numerically quite long in the present implementation. But if directly implemented inside $C$, we should attain comparable run times as for GETMe. As the algorithm relocates separately every vertex, it is open for an application of parallel computing techniques.
\item[Polygonal/ polyhedral meshes] Using the duality of certain polygons/ polyhedra to each other  we hope to be able to adapt the current transformation to quadrilateral and hexahedral meshes. This is a current topic of our research.  
\end{description}

\bibliographystyle{plain}
\bibliography{bibfile}

\begin{thebibliography}{10}

\bibitem{BDV}
Christian Bonatti, Lorenzo Diaz, and Marcelo Viana.
\newblock {\em Dynamics beyond uniform hyperbolicity: a global geometric and
  probabilistic perspective}.
\newblock Springer, 2005.

\bibitem{Carey1998}
Graham Carey.
\newblock {\em Computational grids: generation, adaptation and solution
  strategies}.
\newblock Taylor and Francis, 1998.

\bibitem{Field1988}
David~A. Field.
\newblock {Laplacian smoothing and Delaunay triangulations}.
\newblock {\em Communications in Applied Numerical Methods}, 4(6):709--712,
  1988.

\bibitem{F80}
Michael Field.
\newblock Equivariant dynamical systems.
\newblock {\em Trans. Amer. Math. Soc.}, 259(1):185--205, 1980.

\bibitem{Freitag1997}
Lori~A. Freitag.
\newblock {On combining Laplacian and optimization-based mesh smoothing
  techniques}.
\newblock In {\em Trends in Unstructured Mesh Generation}, pages 37--43, 1997.

\bibitem{FreitagKnupp1999}
Lori~A. Freitag and Patrick~M. Knupp.
\newblock Tetrahedral element shape optimization via the jacobian determinant
  and condition number.
\newblock In {\em Proceedings of the 8th International Meshing Roundtable},
  pages 247--258. Sandia National Laboratory, 1999.

\bibitem{FreyGeorge2000}
Pascal~J. Frey and Paul-Louis George.
\newblock {\em Mesh Generation}.
\newblock Hermes Science Publishing, 2000.

\bibitem{H76}
Leonard Herrmann.
\newblock Laplacian-isoparametric grid generation scheme.
\newblock {\em Journal of the Engineering Mechanics Division}, 102(5):749--756,
  1976.

\bibitem{HK03}
Mike Holder and Charles Karr.
\newblock Quadrilateral mesh smoothing using a steady state genetic algorithm.
\newblock In {\em Genetic and Evolutionary Computation — GECCO 2003}, volume
  2724 of {\em Lecture Notes in Computer Science}, pages 2400--2401. Springer
  Berlin Heidelberg, 2003.

\bibitem{Knupp2001}
Patrick~M. Knupp.
\newblock Algebraic mesh quality metrics.
\newblock {\em SIAM Journal on Scientific Computing}, 23(1):193--218, 2001.

\bibitem{V13}
Dimitris Vartziotis.
\newblock General transformations -- regularization and symmetry.
\newblock unpublished manuscript, 2013.

\bibitem{VAGW08}
Dimitris Vartziotis, Theodoros Athanasiadis, Iraklis Goudas, and Joachim
  Wipper.
\newblock Mesh smoothing using the geometric element transformation method.
\newblock {\em Comput. Methods Appl. Mech. Engrg.}, 197(45-48):3760--3767,
  2008.

\bibitem{VB14}
Dimitris Vartziotis and Doris Bohnet.
\newblock A geometric triangle and tetrahedral mesh smoothing algorithm related
  to damped oszillations.
\newblock Preprint, 2014.

\bibitem{VH14}
Dimitris Vartziotis and Benjamin Himpel.
\newblock Efficient and global optimization-based smoothing methods for
  mixed-volume meshes.
\newblock In Josep Sarrate and Matthew Staten, editors, {\em Proceedings of the
  22nd International Meshing Roundtable}, pages 293--311. Springer
  International Publishing, 2014.

\bibitem{VWS09}
Dimitris Vartziotis, Joachim Wipper, and Bernd Schwald.
\newblock The geometric element transformation method for tetrahedral mesh
  smoothing.
\newblock {\em Comput. Methods Appl. Mech. Engrg.}, 199(1-4):169--182, 2009.

\bibitem{YK09}
A.~Egemen Yilmaz and Mustafa Kuzuoglu.
\newblock A particle swarm optimization approach for hexahedral mesh smoothing.
\newblock {\em Int. J. Numer. Meth. Fluids}, 60(1):55--78, 2009.

\end{thebibliography}
\end{document}